\newcommand{\iu}{{i\mkern1mu}}
\newcommand{\fr}{\mathfrak}
\newcommand{\op}{\operatorname}
 \newtheorem{lemma} {Lemma} [section]
\newtheorem{theorem}[lemma]{Theorem} 
\newtheorem{prop} [lemma]{Proposition}  
\newtheorem{definition}[lemma] {Definition} 
\newtheorem{corol}[lemma] {Corollary}
\begin{document}
\title{Geodesic orbit spaces of compact Lie groups of rank two} 
\author{Nikolaos Panagiotis Souris}\thanks{The author wishes to thank Prof. Y. Nikonorov for pointing out the omission of the Berger spheres in the original version of the manuscript}. 
\address{University of Patras, Department of Mathematics, University Campus, 26504, Rio Patras, Greece}
\email{nsouris@upatras.gr}

\begin{abstract}
Geodesic orbit spaces are those Riemannian homogeneous spaces $(G/H,g)$ whose geodesics are orbits of one-parameter subgroups of $G$.  We classify the simply connected geodesic orbit spaces where $G$ is a compact Lie group of rank two.  We prove that the only such spaces for which the metric $g$ is not induced from a bi-invariant metric on $G$ are certain spheres and projective spaces, endowed with metrics induced from Hopf fibrations.

\medskip
\noindent  {\it Mathematics Subject Classification 2010.} Primary 53C25; Secondary 53C30. 

\medskip
\noindent {\it Keywords}:  geodesic orbit space; geodesic orbit metric; geodesics in homogeneous spaces

\end{abstract}
\maketitle

\section{Introduction}

Geodesic orbit spaces (or g.o. spaces) $(G/H,g)$ are natural generalizations of symmetric spaces, satisfying the property that their geodesics are orbits of one parameter subgroups of $G$.  Equivalently, for any geodesic $\gamma$ through the origin $o=eH$ there exists a non-zero vector $X$ in the Lie algebra $\fr{g}$ of $G$ such that 

\begin{equation*}\gamma(t)=\exp(tX)\cdot o,\end{equation*}

\noindent where $\exp:\fr{g}\rightarrow G$ is the exponential map and $\cdot$ denotes the action of $G$ on $G/H$.  The $G$-invariant metric $g$ is also called geodesic orbit (or g.o. metric). Geodesic orbit spaces were initially considered in \cite{KoVa}; since then they are extensively studied within the Riemannian, pseudo-Riemannian and Finsler framework (see the recent studies \cite{Ar1}, \cite{CaZa}, \cite{CheCheWo}, \cite{GoNi}, \cite{Ni}, \cite{YaDe} and references therein), while their complete classification remains open.  On the other hand, important partial classification results were obtained among other works in \cite{AlAr}, \cite{AlNi}, \cite{CheNi}, \cite{Gor1} and \cite{Ta}.

Apart from symmetric spaces, other subclasses of g.o. spaces include \emph{weakly symmetric spaces} $(G/H,g)$ with $G$ being the full isometry group (\cite{BeKoVa}, \cite{Wo2}), \emph{isotropy irreducible spaces} (\cite{Wo}), \emph{$\delta$-homogeneous spaces} (\cite{BeNi1}) as well as \emph{Clifford-Wolf homogeneous spaces} (\cite{BeNi2}).  Another important subclass whose complete description also remains open is that of \emph{naturally reductive spaces} (see the recent works \cite{Ag}, \cite{Ol}, \cite{St} and references therein).  Most known g.o. metrics are naturally reductive with the prime example being the \emph{normal metrics}, that is those metrics induced from bi-invariant metrics on $G$. If $G$ is compact then $G/H$ admits at least one normal (and hence g.o.) metric $g$. If $G$ is semisimple and $g$ is induced from the negative of Killing form of $\fr{g}$ then $g$ is called \emph{standard}.

In this paper we classify the simply connected Riemannian g.o. spaces $(G/H,g)$, where $G$ is a compact Lie group of rank two and $g$ is non-normal.  It turns out that the only spaces satisfying the above requirements are spheres and projective spaces, with the corresponding metrics being deformation metrics along fibers of Hopf fibrations.  More specifically, the classification is given by the following theorem.

\begin{theorem}\label{main}
 The following are the only simply connected Riemannian geodesic orbit spaces $(G/H, g)$ such that $G$ is a compact Lie group of rank two and $g$ is not a normal metric: \\
 
\noindent \emph{(i)} The sphere $(S^3=U(2)/U(1),g^{\lambda}$), $1\neq \lambda>0$; here $g^{\lambda}$ denotes the one-parameter family of deformations of the standard metric $g^1$ along the fiber $S^1$ of the Hopf fibration of $S^3$ on the complex projective space $\mathbb CP^1$.  The metrics $g^{\lambda}$, $\lambda>0$, exhaust the $U(2)$-invariant metrics on $S^3$.\\
 
 \emph{(ii)} The spheres $(S^5=SU(3)/SU(2), g^{\lambda}$), $1\neq \lambda>0$; here $g^{\lambda}$ denotes the one-parameter family of deformations of the standard metric $g^1$ along the fiber $S^1$ of the Hopf fibration of $S^5$ on the complex projective space $\mathbb CP^2$.  The metrics $g^{\lambda}$, $\lambda>0$, exhaust the $SU(3)$-invariant metrics on $S^5$.\\
 
 \emph{(iii)}  The complex projective space $\big(\mathbb CP^3=Sp(2)/(U(1)\times Sp(1)),g^{\lambda}\big)=(SO(5)/U(2),g^{\lambda})$, $1\neq \lambda>0$; here $g^{\lambda}$ denotes the one-parameter family of deformations of the standard metric $g^1$ along the fiber $\mathbb C P^1$ of the Hopf fibration of $\mathbb C P^3$ on the quaternionic projective space $\mathbb H P^1$.  The metrics $g^{\lambda}$, $\lambda>0$, exhaust the $Sp(2)$-invariant metrics on $\mathbb CP^3$.\\
 
 \emph{(iv)} The sphere $(S^7=Sp(2)/Sp(1),g^{\lambda})$, $1\neq \lambda>0$; here $g^{\lambda}$ denotes the one-parameter family of deformations of the standard metric $g^1$ along the fiber $S^3$ of the Hopf fibration of $S^7$ on the quaternionic projective space $\mathbb H P^1$. The metrics $g^{\lambda}$, $\lambda>0$, exhaust the $Sp(2)\times Sp(1)$-invariant metrics on $S^7$.
\end{theorem}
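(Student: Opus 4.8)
The plan is to reduce the classification to a finite case analysis and then dispatch each candidate by running the geodesic orbit criterion over its space of invariant metrics. First come the standing reductions: $G$ compact lets us assume the action is almost effective, simple connectivity of $G/H$ forces $H$ connected, and, crucially, the hypothesis that $g$ is non-normal rules out the isotropy irreducible spaces (on those every invariant metric is a scalar multiple of the standard, hence normal, metric), so $\Ad(H)$ acts reducibly on a reductive complement $\fr{m}$ of $\fr{h}$ in $\fr{g}$. Combining the list of compact rank-two Lie algebras --- up to covering $\fr{g}$ is one of $\fr{g}_2$, $\fr{su}(3)$, $\fr{sp}(2)$, $\fr{su}(2)\oplus\fr{su}(2)$, $\fr{su}(2)\oplus\R$ and $\R\oplus\R$ --- with the classification of their subalgebras, I would enumerate all pairs $(\fr{g},\fr{h})$ with reducible isotropy, recording for each the global groups $G$ giving a simply connected $G/H$ and the decomposition $\fr{m}=\fr{m}_1\oplus\cdots\oplus\fr{m}_p$ into $\Ad(H)$-irreducible pieces. (The abelian case $\R\oplus\R$ is immediate: quotients are flat tori, all of whose metrics are bi-invariant, hence normal.) This produces a finite, if lengthy, list of candidates.

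For each candidate I would parametrize the $G$-invariant metrics by the positive $\Ad(H)$-equivariant symmetric endomorphisms $A$ of $\fr{m}$ and apply the geodesic orbit criterion: $(G/H,g_A)$ is geodesic orbit if and only if for every $X\in\fr{m}$ there is $Z=Z(X)\in\fr{h}$ with $[Z+X,AX]_{\fr{m}}=0$. Three mechanisms do the bulk of the work. First, if a summand $\fr{m}_i$ is $\Ad(H)$-fixed and closes up to a subalgebra isomorphic to $\fr{su}(2)$ --- as happens after isolating the fibre direction in $S^7=\Sp(2)/\Sp(1)$, and for the small summands of $S^5=\SU(3)/\SU(2)$ and $S^3=\U(2)/\U(1)$ --- the criterion at $X\in\fr{m}_i$ forces $A|_{\fr{m}_i}$ to be scalar; for $\Sp(2)/\Sp(1)$ this already collapses the a priori seven-parameter family of invariant metrics onto the two-parameter family of deformations along the $S^3$-fibre of the Hopf fibration over $\mathbb{H}P^1$. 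Second, for generalized flag manifolds --- $\fr{h}$ the centralizer of a torus, i.e.\ $\G_2/T^2$, $\SU(3)/T^2$, $\Sp(2)/T^2$, $\G_2/\U(2)$, $\Sp(2)/(\U(1)\times\Sp(1))$, $\SO(5)/\U(2)$ and their relatives --- I would invoke the classification of flag manifolds with homogeneous geodesics in \cite{AlAr}: among flags of compact rank-two groups the only one bearing a non-normal g.o.\ metric is $\mathbb{C}P^3=\Sp(2)/(\U(1)\times\Sp(1))=\SO(5)/\U(2)$, with its deformation along the $\mathbb{C}P^1$-fibre of the twistor fibration over $\mathbb{H}P^1$. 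Third, in the residual cases --- the sphere presentations, the Aloff--Wallach-type spaces $\SU(3)/\U(1)$ and $\Sp(2)/\U(1)$, the spaces $\G_2/\SU(2)$, and the quotients of $\SU(2)\times\SU(2)$ and of $\U(2)\cong(\SU(2)\times\U(1))/\mathbb{Z}_2$ --- the criterion either couples the diagonal entries of $A$ rigidly enough to force $g_A$ normal, or collapses $g_A$ onto a one-parameter deformation along a single Hopf fibre. The net effect is that the only candidates surviving with a genuinely non-normal g.o.\ metric are $S^3=\U(2)/\U(1)$, $S^5=\SU(3)/\SU(2)$, $\mathbb{C}P^3=\Sp(2)/(\U(1)\times\Sp(1))$ and $S^7=\Sp(2)/\Sp(1)$.

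It remains to settle the converse and the normality bookkeeping. For each of the four families one checks directly from the criterion that every metric $g^\lambda$ of the displayed one-parameter family is g.o.\ (in fact naturally reductive), and then identifies the non-normal members. For (ii), (iii) and (iv) the group $G$ is simple, so up to scaling the only bi-invariant metric on $G$ is the one proportional to the Killing form, and $g^\lambda$ is normal precisely for $\lambda=1$. For (i) the group $\U(2)$ is not semisimple, so it carries a genuine two-parameter family of bi-invariant metrics; here I would compute the metrics these induce on $S^3=\U(2)/\U(1)$ explicitly, single out the normal members of $\{g^\lambda\}$, and confirm that the remaining ones are non-normal g.o.\ metrics (the Berger spheres). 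Finally one records the coincidences in the statement --- $\mathbb{C}P^3=\Sp(2)/(\U(1)\times\Sp(1))=\SO(5)/\U(2)$, and the enlarged transitive group $\Sp(2)\times\Sp(1)$ acting on $S^7=\Sp(2)/\Sp(1)$ --- and verifies simple connectivity throughout.

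I expect the principal obstacle to be the breadth and non-uniformity of the case analysis rather than any single deep step: one must push the geodesic orbit criterion through every reducible-isotropy homogeneous space of the five non-abelian rank-two Lie algebras, and the non-semisimple algebras $\fr{su}(2)\oplus\fr{su}(2)$ and $\fr{su}(2)\oplus\R$, together with the lower-dimensional quotients, produce multi-parameter families of invariant metrics, each of which must be shown to be either normal or not g.o. The single most delicate point --- and, as the footnote indicates, the one most easily overlooked --- is the $S^3=\U(2)/\U(1)$ family, where the non-semisimplicity of $G$ is exactly what produces the non-normal Berger metrics and forces the explicit normality computation above.
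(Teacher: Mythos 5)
Your overall architecture --- reduce to a finite list of pairs $(\fr{g},\fr{h})$ with reducible isotropy, then run the criterion $[a+X,\Lambda X]=0$ of Proposition \ref{GOCond} over the invariant metrics of each candidate --- coincides with the paper's, and several ingredients match: the flag-manifold/maximal-rank input, the scalar-forcing on $\fr{su}(2)$ fibre summands via Lemmas \ref{DualNormalizer} and \ref{GOLieGroups}, and the separate treatment of $U(2)/U(1)$. (Your observation that normality of the metrics on $S^3=U(2)/U(1)$ must be tested against the full two-parameter family of bi-invariant products on $\fr{u}(2)$, not just one of them, is a point the paper passes over quickly.) However, the two families of cases where the paper has to work hardest are dispatched in your proposal by the unsupported assertion that ``the criterion couples the diagonal entries of $A$ rigidly enough to force $g_A$ normal,'' and this is a genuine gap rather than a routine verification.

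Concretely: (a) for one-dimensional abelian isotropy ($\SU(3)/\U(1)$, $\Sp(2)/\U(1)$, $G_2/\U(1)$, circle subgroups of $\SU(2)\times\SU(2)$) the isotropy representation has many summands, frequently with equivalent pairs, so $\Lambda$ need not even be diagonal with respect to the root-space decomposition; proving that every g.o.\ metric is normal here is the content of a separate paper (\cite{So2}) which the present one invokes, and you supply no substitute argument. (b) For $\fr{h}=\fr{su}(2)\subset\fr{g}_2$, merely enumerating the four conjugacy classes of embeddings requires the subalgebra classifications of \cite{DoRe} and \cite{Ma} (Tables~I--II), and for the two root $\fr{su}(2)$'s the criterion alone does not close the case: the paper needs Proposition \ref{mainargument}, which identifies $\fr{h}\oplus\fr{p}$ with the unique $\fr{so}(4)\subset\fr{g}_2$ (Lemma \ref{Uniq}) and then appeals to the classification of g.o.\ spaces fibered over irreducible symmetric spaces (\cite{Ta}); the third embedding is settled by the two-summand classification of \cite{CheNi} and the fourth by isotropy irreducibility (\cite{Wo}). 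None of these mechanisms, nor any replacement for them, appears in your sketch, so the $G_2$ and abelian-isotropy branches of the case analysis remain unproved.
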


We remark that the aforementioned g.o. spaces are known in the literature, and for certain values of the parameter $\lambda$ they constitute examples of $\delta$-homogeneous manifolds (\cite{BeNis}). The first three spaces are also weakly symmetric manifolds.  Finally, we have the following corollary of Theorem \ref{main} for the compact Lie group $G_2$.

\begin{corol}\label{CDM} A homogeneous Riemannian space of the form $(G_2/H,g)$ is geodesic orbit if and only if $g$ is the standard metric.\end{corol}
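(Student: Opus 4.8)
The plan is to deduce Corollary~\ref{CDM} directly from Theorem~\ref{main}; the only work beyond quoting that theorem is a reduction to the simply connected case together with the remark that, for a compact \emph{simple} group, a normal metric is nothing but a rescaling of the standard one.

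The ``if'' direction is immediate: the standard metric on $G_2/H$ is by definition induced from the bi-invariant metric on $G_2$ determined by the negative of the Killing form of $\fr g=\operatorname{Lie}(G_2)$ (which is positive definite since $G_2$ is compact), so it is a normal metric, and normal metrics are naturally reductive, in particular geodesic orbit, as recalled in the introduction.

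For the converse, I would take a Riemannian geodesic orbit space $(G_2/H,g)$ and first pass to the universal cover. Let $H_0$ be the identity component of $H$. Since $\fr h:=\operatorname{Lie}(H)=\operatorname{Lie}(H_0)$, a reductive decomposition $\fr g=\fr h\oplus\fr m$ for $G_2/H$ is also one for $G_2/H_0$, and the $\Ad(H)$-invariant inner product on $\fr m$ defining $g$ is $\Ad(H_0)$-invariant; moreover the geodesic orbit condition — that for every $X\in\fr m$ there exist $Z\in\fr h$ with $\langle[X+Z,Y]_{\fr m},X\rangle=0$ for all $Y\in\fr m$ — involves only $\fr h$, $\fr m$ and that inner product, hence is identical for $G_2/H$ and $G_2/H_0$. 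Thus the corresponding metric on $G_2/H_0$ is again geodesic orbit; and since $G_2$ is simply connected and $H_0$ is connected, $G_2/H_0$ is simply connected. Theorem~\ref{main} now applies with $G=G_2$, a compact Lie group of rank two: either this metric is normal, or $(G_2/H_0,g)$ occurs in the list~(i)--(iv). The latter is impossible, because the transitive group in each of (i)--(iv) is one of $U(2)$, $SU(3)$, $Sp(2)$, $SO(5)$, and in particular is not $G_2$. Hence $g$ is a normal metric on $G_2/H_0$. Finally, $G_2$ being simple, every bi-invariant metric on it is a positive multiple of the one given by the negative of the Killing form, so $g$ is homothetic to the standard metric on $G_2/H_0$; and since $G_2/H_0\to G_2/H$ is a Riemannian covering which does not change the bi-invariant metric on $G_2$ inducing $g$, the same holds for $g$ on $G_2/H$. (As usual in such classifications, ``the standard metric'' is understood up to scaling.)

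I expect no genuine obstacle here: the mathematical substance is entirely carried by Theorem~\ref{main}, and what remains is bookkeeping. The only two points deserving an explicit sentence are the invariance of the geodesic orbit condition under replacing $H$ by its identity component, and the identification — valid precisely because $G_2$ is simple — of normal metrics on $G_2/H$ with multiples of the standard metric.
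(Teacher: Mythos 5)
Your argument is correct and is essentially the paper's own: the paper disposes of this corollary in one line by citing Theorem~\ref{main} together with Corollary~\ref{UnivCover1}, and your reduction from $G_2/H$ to $G_2/H^0$ via the invariance of the criterion of Proposition~\ref{GOCond} is precisely the content of Proposition~\ref{UnivCover}(ii)--(iii) specialized to the simply connected group $G_2$. Your additional observations --- that none of the spaces (i)--(iv) has transitive group $G_2$, and that normality forces the standard metric up to scaling because $\fr{g}_2$ is simple --- are exactly the implicit steps the paper relies on.
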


\noindent Corollary \label{CDM} follows from Theorem \ref{main} and Corollary \ref{UnivCover1}. 

\subsection{Overview of the proof}
To prove Theorem \ref{main}, we firstly reduce the classification to those simply connected g.o. spaces $(G/H,g)$ such that $G$ is semisimple and the Lie algebra $\fr{h}$ of $H$ is isomorphic to $\fr{su}(2)$.  To obtain this reduction, we take into account the classification of the compact, simply connected g.o. spaces of positive Euler characteristic (\cite{AlNi}) as well as the recent characterization of the g.o. spaces $(G/H,g)$ with $G$ compact semisimple and $H$ abelian (\cite{So2}). 

 For the description of the spaces $G/H$ with $G$ semisimple and $\fr{h}=\fr{su}(2)$, we take into account the explicit description of the embeddings of $\fr{sl}_2\mathbb C$ in the complexified Lie algebra $\fr{g}^{\mathbb C}$ of $G$ in \cite{DoRe} (see also Table~I).  The corresponding embedding of $\fr{su}(2)$ in $\fr{g}$ is shown in Table~II.  For most spaces $G/H$ with $\fr{h}=\fr{su}(2)$, we explicitly calculate the \emph{isotypic decomposition} of the \emph{isotropy representation} of $H$ on the tangent space $T_o(G/H)$ (see further details in subsection \ref{IsotSub}).  The isotypic decomposition allows the explicit description of all $G$-invariant metrics on $G/H$.
 
   A useful tool in identifying the candidate g.o. metrics among the $G$-invariant metrics is Lemma \ref{NormalizerLemma}, combined with Lemma \ref{DualNormalizer}; These lemmas state that any g.o. metric on $G/H$ induces a g.o. metric on the homogeneous space $G/N_G(H^0)$ and a bi-invariant metric on the compact group $N_G(H^0)/H^0$ (here $H^0$ denotes the identity component of $H$ and $N_G(H^0)$ denotes the normalizer of $H^0$ in $G$).
  
  To decide whether a given metric is geodesic orbit, we also take into account the recent classification of the g.o. spaces whose isotropy representation decomposes into two irreducible summands in \cite{CheNi},  the classification of the g.o. metrics on spheres in \cite{Nik0} as well as the classification of the g.o. spaces fibered over irreducible symmetric spaces in \cite{Ta}. Finally, a central argument for the case $G=G_2$ is Proposition \ref{mainargument}, where we show that if the Lie algebra of $N_{G_2}(H)/H$ is isomorphic to $\fr{su}(2)$ then any g.o. metric on $G_2/H$ is standard.  

In Section \ref{Prel} we state some preliminary facts for invariant metrics on homogeneous spaces and the isotropy representation (subsections \ref{subsection1} and \ref{IsotSub}).   We also mention and derive some useful simplification results for geodesic orbit metrics (subsection \ref{GOMET}), while we list some special subgroups $H$ of $G$ for which the g.o. metrics on $G/H$ are known in the literature (subsection \ref{CategoriesSpecial}).  Section \ref{PrelG2} contains preliminary results about the root structure of the compact semisimple Lie algebras $\fr{g}$ of rank two, the embeddings of $\fr{sl}_2 \mathbb C$ in $\fr{g}^{\mathbb C}$ and the corresponding embeddings of $\fr{su}(2)$ in $\fr{g}$.  Finally, in Section \ref{proof} we prove Theorem \ref{main}.

\section{Preliminaries on compact homogeneous spaces and geodesic orbit spaces}\label{Prel}

\subsection{Homogeneous spaces $G/H$ with $G$ compact, and $G$-invariant metrics}\label{subsection1}
Let $G/H$ be a homogeneous space with $G$ compact, and let $o=eH$ be its origin.  Since the isotropy subgroup $H$ is closed in $G$, it is also compact. We denote by $\fr{g},\fr{h}$ the Lie algebras of $G,H$ respectively.  Let $\op{Ad}:G\rightarrow \op{Aut}(\fr{g})$ be the adjoint representation of $G$ and let $\op{ad}:\fr{g}\rightarrow \op{End}(\fr{g})$ be the adjoint representation of $\fr{g}$, where $\op{ad}_XY=[X,Y]$.  Since $G$ is compact there exists an $\op{Ad}$-invariant inner product $Q$ on $\fr{g}$, which we will henceforth fix (for compact semisimple Lie algebras, the negative of the Killing form is such a product).  The Lie algebra $\fr{g}$ admits a $Q$-orthogonal decomposition

\begin{equation*}\label{Dec}\fr{g}=\fr{h}\oplus \fr{m},\end{equation*}

\noindent where $\fr{m}$ is $\op{Ad}_H$-invariant (and hence $\op{ad}_{\fr{h}}$-invariant).  The space $\fr{m}$ can be naturally identified with the tangent space $T_o(G/H)$.  A Riemannian metric $g$ on $G/H$ is called \emph{$G$-invariant} if for all $x\in G$ the left translations $\tau_x:G/H\rightarrow G/H$, $yH\mapsto (xy)H$, are isometries of $(G/H,g)$.  The $G$-invariant metrics on $G/H$ are in one to one correspondence with $\op{Ad}_H$-invariant inner products $\langle \ ,\ \rangle$ on $\fr{m}$, and the latter are in one to one correspondence with endomorphisms $\Lambda:\fr{m}\rightarrow \fr{m}$ satisfying

\begin{equation}\label{MetEnd}\langle X,Y \rangle=Q(\Lambda X,Y), \ \ X,Y\in \fr{m}.\end{equation}    

\noindent An endomorphism $\Lambda\in \op{End}(\fr{m})$ satisfying the above equation for some $\op{Ad}_H$-invariant inner product $\langle \ ,\ \rangle$ is called a \emph{metric endomorphism} and defines a unique $G$-invariant metric on $G/H$.  It follows that any metric endomorphism $\Lambda$ is symmetric with respect to $Q$, positive definite and $\op{Ad}_H$-equivariant, that is $\op{Ad}_h\circ \Lambda=\Lambda\circ \op{Ad}_h$ for all $h\in H$.  Since $\Lambda$ is $\op{Ad}_H$-equivariant, it is also $\op{ad}_{\fr{h}}$-equivariant and the converse holds if $H$ is connected.     

\subsection{The isotropy representation and the form of the metric endomorphisms}\label{IsotSub}
The \emph{isotropy representation} $\op{Ad}^{G/H}:H\rightarrow \op{Gl}(\fr{m})$ is the restriction to $\fr{m}$ of the adjoint representation of $H$ on $\fr{g}$, i.e. $\op{Ad}^{G/H}(h)X:=\op{Ad}_hX$ for $h\in H$ and $X\in \fr{m}$.  If $H$ is connected, $\op{Ad}^{G/H}$ is completely determined by the corresponding isotropy algebra representation $\op{ad}^{\fr{g}/\fr{h}}:\fr{h}\rightarrow \op{End}(\fr{m})$, given by $\op{ad}^{\fr{g}/\fr{h}}(a)X:=\op{ad}_aX=[a,X]$ for $a\in \fr{h}$ and $X\in \fr{m}$.

The space $\fr{m}$ admits a $Q$-orthogonal decomposition into irreducible $\op{Ad}^{G/H}$-submodules, and the pairwise equivalent submodules comprise the isotypic components of $\op{Ad}^{G/H}$. More specifically, a subspace $\fr{p}$ of $\fr{m}$ is called an \emph{isotypic component} of $\op{Ad}^{G/H}$ if the following two conditions hold:\\
 (i) $\fr{p}=\fr{m}_1\oplus \cdots \oplus \fr{m}_l$ where $\fr{m}_j$, $j=1,\dots ,l$, are pairwise equivalent, irreducible $\op{Ad}^{G/H}$-submodules.\\
 (ii) If $\fr{n}\subseteq \fr{m}$ is a submodule of $\op{Ad}^{G/H}$ which is equivalent to $\fr{m}_j$, $j=1,\dots,l$, then $\fr{n}\subseteq \fr{p}$.\\
   The tangent space $\fr{m}$ admits a unique $Q$-orthogonal decomposition

\begin{equation*}\label{isotyp}\fr{m}=\fr{p}_1\oplus \cdots \oplus \fr{p}_s,\end{equation*}

\noindent called the \emph{isotypic decomposition of $\op{Ad}^{G/H}$}, where $\fr{p}_1,\dots, \fr{p}_s$ are the isotypic components of $\op{Ad}^{G/H}$.  If $H$ is connected, then the isotypic decomposition of $\op{Ad}^{G/H}$ coincides with the isotypic decomposition of $\op{ad}^{\fr{g}/\fr{h}}$.  Any metric endomorphism $\Lambda\in \op{End}(\fr{m})$ admits the block-diagonal form

\begin{equation*}\Lambda=\begin{pmatrix} 
 \left.\Lambda\right|_{\fr{p}_1} & 0 & \cdots &0\\
 0& \left.\Lambda\right|_{\fr{p}_2} &\cdots &0\\
  \vdots & \cdots & \ddots &\vdots\\
  0&\cdots &\cdots &\left.\Lambda\right|_{\fr{p}_s}
  \end{pmatrix}.
\end{equation*}
   
\noindent In particular, $\Lambda\fr{p}_j\subseteq \fr{p}_j$.  Moreover, if an isotypic component $\fr{p}_j$ is $\op{Ad}_H$-irreducible (and thus $\op{Ad}^{G/H}$-irreducible) then $\left.\Lambda\right|_{\fr{p}_j}=\lambda_j\op{Id}$.  The same conclusion is true if $H$ is connected and $\fr{p}_j$ is $\op{ad}_{\fr{h}}$-irreducible.

An important isotypic component of the representation $\op{ad}^{\fr{g}/\fr{h}}:\fr{h}\rightarrow \op{End}(\fr{m})$ is the Lie algebra 

\begin{equation}\label{palgebra}\fr{p}=\{X\in \fr{m}: [a,X]=0\ \ \makebox{for all} \ \ a\in \fr{h}\},\end{equation}

\noindent consisting of those elements in $\fr{m}$ to which $\fr{h}$ acts trivially.  The irreducible $\op{ad}^{\fr{g}/\fr{h}}$-submodules comprising $\fr{p}$ are one-dimensional.  For a group $K$, denote by $K^0$ its identity component. We have the following.  

\begin{lemma}\label{Katak} The isotypic component $\fr{p}$ defined by relation \eqref{palgebra} coincides with the Lie algebra $\fr{n}_{\fr{g}}(\fr{h})/\fr{h}$ of the compact Lie group $N_G(H^0)/H^0$, where $\fr{n}_{\fr{g}}(\fr{h})=\{X\in \fr{g}: [a,X]\in \fr{h}\ \ \makebox{for all} \ \ a\in \fr{h}\}$. 
\end{lemma}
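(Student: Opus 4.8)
The plan is to establish the equality $\fr{p} = \fr{n}_{\fr{g}}(\fr{h})/\fr{h}$ as subspaces of $\fr{m} \cong \fr{g}/\fr{h}$ by a direct double-inclusion argument using the $Q$-orthogonal decomposition $\fr{g} = \fr{h} \oplus \fr{m}$, and then separately to check that $\fr{p}$ is indeed an isotypic component and that it is the Lie algebra of the group $N_G(H^0)/H^0$. First I would recall that since $Q$ is $\op{Ad}_H$-invariant and $\fr{h}$ is a subalgebra, the orthogonal complement $\fr{m}$ is $\op{ad}_{\fr{h}}$-invariant; in particular, for $X \in \fr{m}$ and $a \in \fr{h}$ we always have $[a,X] \in \fr{m}$. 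Now take $X \in \fr{m}$. Writing the general element of $\fr{g}$ as $Z = a_0 + X$ with $a_0 \in \fr{h}$, $X \in \fr{m}$, observe that $[a, Z] = [a,a_0] + [a,X]$ decomposes into its $\fr{h}$-part $[a,a_0]$ and its $\fr{m}$-part $[a,X]$; hence $Z \in \fr{n}_{\fr{g}}(\fr{h})$ if and only if $[a,X] = 0$ for all $a \in \fr{h}$, i.e. if and only if $X \in \fr{p}$. This shows that $\fr{n}_{\fr{g}}(\fr{h}) = \fr{h} \oplus \fr{p}$, so that $\fr{p}$ maps isomorphically onto $\fr{n}_{\fr{g}}(\fr{h})/\fr{h}$ under the identification $\fr{m} \cong \fr{g}/\fr{h}$.

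Next I would verify the isotypic-component claim: $\fr{p}$ is by definition the set of $X \in \fr{m}$ fixed by $\op{ad}_{\fr{h}}$, which is exactly the sum of all trivial irreducible $\op{ad}^{\fr{g}/\fr{h}}$-submodules of $\fr{m}$; since any submodule equivalent to the trivial module is contained in this fixed-point set, conditions (i) and (ii) in the definition of isotypic component are satisfied, and the irreducible constituents are one-dimensional because the trivial representation is one-dimensional. Finally, for the group-theoretic identification, I would use the standard fact that $\fr{n}_{\fr{g}}(\fr{h})$ is the Lie algebra of $N_G(H^0)$: indeed $N_G(H^0)$ is a closed subgroup of $G$, hence a compact Lie group, and an element $Z \in \fr{g}$ lies in its Lie algebra iff $\exp(tZ)$ normalizes $H^0$ for all $t$, which (differentiating $\op{Ad}_{\exp(tZ)}\fr{h} = \fr{h}$) is equivalent to $\op{ad}_Z \fr{h} \subseteq \fr{h}$, i.e. $Z \in \fr{n}_{\fr{g}}(\fr{h})$. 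Since $H^0$ is the identity component of $H$ with Lie algebra $\fr{h}$, and $H^0$ is normal in $N_G(H^0)$, the quotient $N_G(H^0)/H^0$ is a compact Lie group whose Lie algebra is $\fr{n}_{\fr{g}}(\fr{h})/\fr{h}$; combining with the first paragraph gives $\fr{p} \cong \fr{n}_{\fr{g}}(\fr{h})/\fr{h}$ as claimed.

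The main obstacle is not any deep computation but rather bookkeeping the identifications carefully: one must be consistent about viewing $\fr{p}$ simultaneously as a concrete $Q$-orthogonal complement inside $\fr{g}$, as a subspace of $T_o(G/H) \cong \fr{g}/\fr{h}$, and as the Lie algebra of an abstract quotient group, and check that the bracket on $\fr{p}$ inherited from $\fr{g}$ agrees with the Lie bracket of $N_G(H^0)/H^0$. The only mild subtlety is that $\fr{p}$ need not be $\op{ad}_{\fr{g}}$-invariant, but it \emph{is} closed under its own bracket: if $X, Y \in \fr{p}$ then for $a \in \fr{h}$ the Jacobi identity gives $[a,[X,Y]] = [[a,X],Y] + [X,[a,Y]] = 0$, so $[X,Y] \in \fr{m}$ is centralized by $\fr{h}$, hence $[X,Y] \in \fr{p}$; this is precisely what makes $\fr{p}$ a Lie algebra and matches it with $\fr{n}_{\fr{g}}(\fr{h})/\fr{h}$ under the quotient bracket.
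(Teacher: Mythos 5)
Your proposal is correct and follows essentially the same route as the paper: a direct double-inclusion argument with the $Q$-orthogonal reductive decomposition showing $\fr{n}_{\fr{g}}(\fr{h})=\fr{h}\oplus\fr{p}$, where your use of the $\op{ad}_{\fr{h}}$-invariance of $\fr{m}$ (so that $[a,X]\in\fr{m}\cap\fr{h}=\{0\}$) is just a slight streamlining of the paper's computation with the $\Ad$-invariance of $Q$. The extra verifications you include (that $\fr{p}$ is closed under the bracket and that $\fr{n}_{\fr{g}}(\fr{h})$ is the Lie algebra of $N_G(H^0)$) are correct and are simply taken for granted in the paper.
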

\begin{proof} Consider the $Q$-orthogonal decomposition $\fr{n}_{\fr{g}}(\fr{h})=\fr{h}\oplus \fr{r}$.  It suffices to show that $\fr{p}=\fr{r}$.  In view of the $Q$-orthogonal decomposition $\fr{g}=\fr{h}\oplus \fr{m}$, we have $\fr{r}=\fr{n}_{\fr{g}}(\fr{h})\cap \fr{m}$. Given that $\fr{p}$ is a subset of both $\fr{n}_{\fr{g}}(\fr{h})$ and $\fr{m}$, we obtain $\fr{p}\subseteq \fr{r}$.  On the other hand, since $\fr{n}_{\fr{g}}(\fr{h})$ normalizes $\fr{h}$ we have $[\fr{r},\fr{h}]\subseteq \fr{h}$, while the $\op{Ad}$-invariance of the product $Q$ yields $Q([\fr{r},\fr{h}],\fr{h})\subseteq Q(\fr{r},[\fr{h},\fr{h}])\subseteq Q(\fr{r},\fr{h})=\{0\}$, and thus $[\fr{r},\fr{h}]\subseteq \fr{r}$.  The last inclusion along with inclusion $[\fr{r},\fr{h}]\subseteq \fr{h}$ and the $Q$-orthogonality of $\fr{h}$ and $\fr{r}$ yield $[\fr{r},\fr{h}]=\{0\}$. Since $\fr{r}\subset \fr{m}$, the last relation yields $\fr{r}\subseteq \fr{p}$.\end{proof}

\subsection{Geodesic orbit metrics}\label{GOMET}

\begin{definition}
A $G$-invariant metric $g$ on $G/H$ is called a geodesic orbit metric (or a g.o. metric) if any geodesic of $(G/H,g)$ is an orbit of a one parameter subgroup of $G$.  Equivalently, $g$ is a geodesic orbit metric if for any geodesic $\gamma$ of $(G/H,g)$ through the origin $o$ there exists a non-zero vector $X\in \fr{g}$ such that $\gamma(t)=\exp (tX)\cdot o$, $t\in \mathbb R$.  The space $(G/H,g)$ is called a geodesic orbit space (or g.o. space). 

\end{definition}

Let $G/H$ be a homogeneous space with $G$ compact.  We henceforth fix an $\op{Ad}$-invariant inner product $Q$ on $\fr{g}$ and we consider the $Q$-orthogonal reductive decomposition $\fr{g}=\fr{h}\oplus \fr{m}$. We identify each $G$-invariant metric on $G/H$ with the corresponding metric endomorphism $\Lambda\in \op{End}(\fr{m})$.  The following is a necessary and sufficient condition for $\Lambda$ to define a g.o. metric.

\begin{prop}\emph{(\cite{AlAr}, \cite{So1})}\label{GOCond} The metric endomorphism $\Lambda\in \op{End}(\fr{m})$ defines a geodesic orbit metric on $G/H$ if and only if for any vector $X\in\fr{m}\setminus \left\{ {0} \right\}$ there exists a vector $a\in \fr{h}$ such that  

\begin{equation}\label{cor}[a+X,\Lambda X]=0.\end{equation}
\end{prop}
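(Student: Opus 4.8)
The plan is to deduce \eqref{cor} from the classical characterization of geodesic vectors, combined with the special algebraic properties of metric endomorphisms recorded in Subsections~\ref{subsection1}--\ref{IsotSub}. First I would recall the geodesic lemma of Kostant (see \cite{KoVa}): for $Z\in\fr{g}$, written as $Z=a+X$ with $a\in\fr{h}$ and $X\in\fr{m}$, the curve $\gamma(t)=\exp(tZ)\cdot o$ is a geodesic of $(G/H,g)$ through $o$ if and only if
\begin{equation*}\langle X,\,[a+X,Y]_{\fr{m}}\rangle=0\qquad\text{for every }Y\in\fr{m},\end{equation*}
where the subscript $\fr{m}$ denotes $Q$-orthogonal projection onto $\fr{m}$; moreover, under the identification $T_o(G/H)\cong\fr{m}$, one has $\dot\gamma(0)=X$.

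Next I would translate this condition into the language of $Q$ and $\Lambda$. Using $\langle U,V\rangle=Q(\Lambda U,V)$ from \eqref{MetEnd}, the $Q$-symmetry of $\Lambda$, the fact that $\Lambda X\in\fr{m}$ is $Q$-orthogonal to $\fr{h}$, and the $\Ad$-invariance of $Q$ (so that $\ad_{a+X}$ is $Q$-skew), the displayed condition becomes $Q\big([a+X,\Lambda X],Y\big)=0$ for every $Y\in\fr{m}$, i.e.\ $[a+X,\Lambda X]\in\fr{h}$. I would also isolate the following identity, valid for all $c\in\fr{h}$ and $X\in\fr{m}$: since $[c,X]\in\fr{m}$ and, by the $\ad_{\fr{h}}$-equivariance of $\Lambda$, $\Lambda[c,X]=[c,\Lambda X]$, the $Q$-symmetry of $\Lambda$ and the $Q$-skewness of $\ad_c$ give
\begin{equation*}Q\big([c,X],\Lambda X\big)=Q\big([c,\Lambda X],X\big)=-Q\big(\Lambda X,[c,X]\big),\end{equation*}
and hence $Q\big([c,X],\Lambda X\big)=0$.

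To finish, I would establish both implications. Assume $g$ is a geodesic orbit metric and fix $X\in\fr{m}\setminus\{0\}$. The geodesic through $o$ with initial velocity $X$ is an orbit $\gamma(t)=\exp(tZ)\cdot o$; differentiating at $t=0$ forces $Z_{\fr{m}}=X$, so $Z=a+X$ with $a\in\fr{h}$, and by the previous paragraph $b:=[a+X,\Lambda X]\in\fr{h}$. Then, using $[a,\Lambda X]\in\fr{m}\perp_Q\fr{h}$, the $Q$-skewness of $\ad_X$, and the last identity with $c=b$,
\begin{equation*}Q(b,b)=Q\big([a+X,\Lambda X],b\big)=Q\big([X,\Lambda X],b\big)=Q\big(\Lambda X,[b,X]\big)=0,\end{equation*}
so $b=0$, which is precisely \eqref{cor}. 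Conversely, if \eqref{cor} holds for every $X\in\fr{m}\setminus\{0\}$, then $[a+X,\Lambda X]\in\fr{h}$, so by the reformulated geodesic lemma the curve $\exp\big(t(a+X)\big)\cdot o$ is a geodesic through $o$ with initial velocity $X$; since a non-constant geodesic through $o$ is determined by its (nonzero) initial velocity in $\fr{m}$, uniqueness of geodesics shows every such geodesic is of this form, i.e.\ $g$ is a geodesic orbit metric.

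The one genuinely nontrivial point is the sharpening from $[a+X,\Lambda X]\in\fr{h}$ to $[a+X,\Lambda X]=0$: the geodesic lemma by itself only annihilates the $\fr{m}$-component of this bracket, and it is precisely the $\ad_{\fr{h}}$-equivariance of $\Lambda$ — available because $\Lambda$ is attached to an $\Ad_H$-invariant inner product — that forces the $\fr{h}$-component to vanish as well. Everything else is formal bookkeeping with the identification $\dot\gamma(0)\leftrightarrow X$ and uniqueness of geodesics through $o$.
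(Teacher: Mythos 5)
Your proof is correct, and it is essentially the standard argument from the cited sources \cite{AlAr} and \cite{So1} (the paper itself gives no proof, only the citation): reformulate Kowalski--Vanhecke's geodesic lemma as $[a+X,\Lambda X]\in\fr{h}$ and then use the $\ad_{\fr{h}}$-equivariance of $\Lambda$ together with the definiteness of $Q$ to kill the $\fr{h}$-component. All the individual steps (the identity $Q([c,X],\Lambda X)=0$, the computation $Q(b,b)=Q(\Lambda X,[b,X])=0$, and the uniqueness-of-geodesics argument for the converse) check out.
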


The prime example of a g.o. metric is the metric induced from an $\op{Ad}$-invariant inner product on $\fr{g}$.  More specifically, a metric $g$ on $G/H$ is called \emph{normal} if there exists an $\op{Ad}$-invariant inner product $Q_0$ on $\fr{g}$ and a $Q_0$-orthogonal decomposition $\fr{g}=\fr{h}\oplus \fr{m}$ such that the corresponding metric endomorphism $\Lambda\in \op{End}(\fr{m})$ is a scalar multiple of the identity.  If $Q_0$ is the negative of the Killing form then the metric $g$ is called \emph{standard}.

There are several necessary conditions that simplify the form of $\Lambda$, given that the latter defines a g.o. metric.  One of the most important of those results is the following. 

\begin{lemma}\label{NormalizerLemma}\emph{(\cite{Ni})}
The inner product $\langle \ ,\ \rangle$, generating the metric of a g.o. Riemannian space $(G/H,g)$, is not only $\op{Ad}_H$-invariant but also $\op{Ad}_{N_G(H^0)}$-invariant.  In particular, if $(G/H,g)$ is a g.o. space then $G/N_G(H^0)$, endowed with the induced metric from $g$, is also a g.o. space.\end{lemma}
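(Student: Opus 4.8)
The plan is to prove the two assertions in turn, the invariance of $\langle\ ,\ \rangle$ being the substantial one; the second assertion will follow easily. First I would reduce to the case where $H$ is connected: the finite covering $G/H^0\to G/H$ is $G$-equivariant, and since both geodesics and orbits of one-parameter subgroups lift, the lifted metric $\hat g$ on $G/H^0$ is again geodesic orbit; as the assertion concerns only the module $\fr m$, the product $Q$, and the group $N_G(H^0)$, it suffices to prove it for $(G/H^0,\hat g)$. Assume then $H=H^0$. For $n\in N_G(H)$ the right translation $R_n\colon gH\mapsto gnH$ is a well-defined diffeomorphism of $G/H$ commuting with the left $G$-action, and since $dR_n|_o=d\tau_n|_o\circ\Ad_{n^{-1}}$ on $\fr m=T_o(G/H)$, a short computation (using that $G$ acts transitively by isometries) shows that $R_n$ is an isometry if and only if $\Ad_n$ preserves $\langle\ ,\ \rangle$ on $\fr m$, equivalently if and only if $\Ad_n$ commutes with the metric endomorphism $\Lambda$.

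Next I would pass to the Lie algebra. By Lemma \ref{Katak}, $\fr n_{\fr g}(\fr h)=\fr h\oplus\fr p$ with $\fr p$ the isotypic component on which $\fr h$ acts trivially; since $\Lambda$ already commutes with $\ad_{\fr h}$ and $\Ad(N_G(H)^0)$ is generated by the operators $\exp(\ad_Z)$, $Z\in\fr n_{\fr g}(\fr h)$, it remains to show $\ad_c\circ\Lambda=\Lambda\circ\ad_c$ for every $c\in\fr p$ (and then to treat separately the finite group $N_G(H)/N_G(H)^0$). Here the point is that $c$ centralizes $\fr h$: for $X\in\fr m\setminus\{0\}$ choose, by Proposition \ref{GOCond}, a vector $a=a(X)\in\fr h$ with $[a+X,\Lambda X]=0$; applying $\Ad_{\exp tc}$ and using $\Ad_{\exp tc}|_{\fr h}=\mathrm{id}$ yields $[a+X_t,\Ad_{\exp tc}\Lambda X]=0$, where $X_t:=\Ad_{\exp tc}X\in\fr m$. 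Combining this with the geodesic orbit condition applied to the vector $X_t$ itself, together with the Jacobi identity applied to $[c,[a+X,\Lambda X]]=0$, and differentiating at $t=0$, one is led to an identity involving $X$, $a(X)$ and $DX:=\Lambda[c,X]-[c,\Lambda X]$; pairing it with well-chosen vectors and using that $\ad_c$ is an $\ad_{\fr h}$-module endomorphism of $\fr m$ (hence, like $\Lambda$, block-diagonal for the isotypic decomposition) should force $DX=0$, i.e. the desired commutation.

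The main obstacle I expect is the bookkeeping around Proposition \ref{GOCond}: the vector $a(X)$ is determined only modulo the centralizer in $\fr h$ of $\Lambda X$, so one must organize its choice (and its dependence on $X$) carefully enough that differentiation produces the commutation relation rather than a weaker one; similarly, extending from the identity component of $N_G(H^0)$ to the full normalizer requires a small separate argument. (This is precisely the content of Nikonorov's argument in \cite{Ni}.) Granted the invariance, the second assertion is immediate: $\Ad_{N_G(H^0)}$-invariance of $\langle\ ,\ \rangle$ makes the natural fibration $\pi\colon G/H\to G/N_G(H^0)$ a Riemannian submersion for the induced metrics, and any geodesic of the base through $\pi(o)$ lifts to a horizontal geodesic of $G/H$ through $o$, which by the geodesic orbit property equals $\exp(tX)\cdot o$ for some $X\in\fr g$ and hence projects to the orbit $\exp(tX)\cdot\pi(o)$; thus $G/N_G(H^0)$ with the induced metric is geodesic orbit.
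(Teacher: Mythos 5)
The paper does not actually prove this lemma: it is quoted from \cite{Ni}, so there is no internal proof to compare yours against. Judged on its own terms, your proposal identifies the correct skeleton --- reduce to $H$ connected, observe that the whole statement comes down to the commutation $\op{ad}_c\circ\Lambda=\Lambda\circ\op{ad}_c$ for $c$ in the centralizer component $\fr{p}$ of Lemma \ref{Katak}, and then deduce the second assertion formally --- and the closing Riemannian-submersion paragraph is sound once the invariance is granted (horizontal geodesics of $G/H$ are orbits of one-parameter subgroups, and every geodesic of the base lifts to a horizontal one).

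However, the central step is not proved. You write that differentiating the transported identity $[a+X_t,\op{Ad}_{\exp tc}\Lambda X]=0$ against the geodesic-orbit identity for $X_t$ ``should force'' $DX:=\Lambda[c,X]-[c,\Lambda X]=0$, and you yourself name the obstruction: Proposition \ref{GOCond} asserts only the existence of some $a(X)\in\fr{h}$, determined up to an affine subspace whose dimension can jump with $X$, so there is no a priori differentiable (or even continuous) selection $t\mapsto a(X_t)$ to differentiate at $t=0$. Without constructing such a selection, or replacing differentiation by a limiting or averaging argument, the computation you sketch cannot be carried out, and the parenthetical ``this is precisely the content of Nikonorov's argument in \cite{Ni}'' concedes that the essential work is being outsourced to the reference rather than supplied. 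The passage from $\op{Ad}$ of the identity component of $N_G(H^0)$ to the full, possibly disconnected, normalizer is likewise only flagged, not done. In short: a correct reduction and a plausible plan, but the algebraic core of the lemma is missing.
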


\begin{corol}\label{NormalizerCorol}Let $\Lambda\in \op{End}(\fr{m})$ be the metric endomorphism of a g.o. metric on $G/H$ and let $\fr{q}\subset \fr{m}$ be the tangent space $T_o(G/N_G(H^0))$.  Then the restriction $\left.\Lambda\right|_{\fr{q}}$ defines a g.o metric on $G/N_G(H^0)$. 
\end{corol}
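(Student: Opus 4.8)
Write $N:=N_G(H^0)$; it is a closed subgroup of $G$, with Lie algebra $\fr n:=\fr n_{\fr g}(\fr h)\supseteq\fr h$. Using the fixed $\op{Ad}$-invariant product $Q$, take the $Q$-orthogonal decompositions $\fr n=\fr h\oplus\fr r$ and $\fr g=\fr n\oplus\fr q$; then $\fr g=\fr h\oplus\fr r\oplus\fr q$ is $Q$-orthogonal, $\fr m=\fr r\oplus\fr q$, and $\fr q=T_o(G/N)$ (it is $\op{Ad}_N$-invariant, being the $Q$-orthogonal complement of the $\op{Ad}_N$-invariant subspace $\fr n$). The plan is to prove three things: (a) $\Lambda\fr q\subseteq\fr q$, so that $\widetilde\Lambda:=\left.\Lambda\right|_{\fr q}$ is a well-defined $Q$-symmetric positive-definite endomorphism of $\fr q$; (b) $\widetilde\Lambda$ is $\op{Ad}_N$-equivariant, hence a metric endomorphism defining a $G$-invariant metric $\widetilde g$ on $G/N$, namely the one induced from $g$; (c) $\widetilde g$ is geodesic orbit.

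For (a), I would use Lemma \ref{Katak}: the subspace $\fr r$ equals $\fr p=\{X\in\fr m:[a,X]=0\text{ for all }a\in\fr h\}$, which is precisely the set of vectors of $\fr m$ fixed by $\op{Ad}_{H^0}$. Since the metric endomorphism $\Lambda$ is $\op{Ad}_H$-equivariant, in particular it commutes with $\op{Ad}_h$ for every $h\in H^0$, and therefore maps this $\op{Ad}_{H^0}$-fixed set into itself, i.e.\ $\Lambda\fr r\subseteq\fr r$. (Equivalently, $\fr p=\fr r$ is one component of the isotypic decomposition of $\op{Ad}^{G/H}$, with respect to which every metric endomorphism is block-diagonal.) As $\Lambda$ is $Q$-symmetric and preserves both $\fr r$ and $\fr m$, it preserves the $Q$-orthogonal complement of $\fr r$ inside $\fr m$, which is $\fr q$; this is (a). For (b), Lemma \ref{NormalizerLemma} gives that the inner product $\langle\ ,\ \rangle$ generating $g$ is $\op{Ad}_N$-invariant, i.e.\ $\Lambda$ is $\op{Ad}_N$-equivariant; restricting to the $\op{Ad}_N$-invariant subspace $\fr q$ shows $\widetilde\Lambda$ is $\op{Ad}_N$-equivariant, which with (a) yields (b). (That $\widetilde g$ is the metric induced from $g$ follows since $\langle X,W\rangle=Q(\Lambda X,W)=0$ for $X\in\fr q$, $W\in\fr r$, because $\Lambda X\in\fr q$ is $Q$-orthogonal to $\fr r$; hence $\fr q$ is also the $\langle\ ,\ \rangle$-orthogonal complement of $\fr r$ in $\fr m$, i.e.\ the horizontal space of the submersion $G/H\to G/N$ at $o$.)

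For (c) one can simply invoke the second assertion of Lemma \ref{NormalizerLemma}. Alternatively, and more directly, apply Proposition \ref{GOCond} to $G/N$: given $X\in\fr q\setminus\{0\}\subseteq\fr m\setminus\{0\}$, the g.o.\ property of $g$ on $G/H$ provides $a\in\fr h$ with $[a+X,\Lambda X]=0$; since $\Lambda X=\widetilde\Lambda X$ and $a\in\fr h\subseteq\fr n$, this very identity is the condition of Proposition \ref{GOCond} for $\widetilde\Lambda$ on $G/N$, so $\widetilde g$ is a g.o.\ metric. The only substantive step is (a): the fact that $\fr q$ is $\Lambda$-invariant, for which Lemma \ref{Katak} — identifying $\fr n_{\fr g}(\fr h)/\fr h$ with the trivial isotypic component $\fr p$ — is exactly the tool required. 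Steps (b) and (c) are then routine bookkeeping with the reductive decompositions, the $\op{Ad}$-invariance supplied by Lemma \ref{NormalizerLemma}, and the criterion of Proposition \ref{GOCond}.
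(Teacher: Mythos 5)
Your argument is correct and follows the paper's intended route: the corollary is stated without proof precisely because it is meant to be read off from Lemma \ref{NormalizerLemma}, and your steps (a)--(c) supply the routine details the paper leaves implicit (the $\Lambda$-invariance of $\fr{q}$ via the trivial isotypic component $\fr{p}=\fr{r}$ of Lemma \ref{Katak} together with $Q$-symmetry, the $\op{Ad}_{N_G(H^0)}$-equivariance from Lemma \ref{NormalizerLemma}, and the g.o.\ property either from that same lemma or directly from Proposition \ref{GOCond}). Nothing is missing; your direct verification of the g.o.\ condition on $G/N_G(H^0)$ via Proposition \ref{GOCond} is a correct, self-contained alternative to citing the second assertion of Lemma \ref{NormalizerLemma}.
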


The following can be considered as a complementary result to Lemma \ref{NormalizerLemma}; It shows that if $(G/H,g)$ is a g.o. space with $G$ compact then the compact Lie group $N_G(H^0)/H^0$, endowed with the induced metric, is also a g.o. space. 

\begin{lemma}\label{DualNormalizer}
Let $(G/H,g)$ be a g.o. space with $G$ compact and with corresponding metric endomorphism $\Lambda$.  Then the restriction of $\Lambda$ to the Lie algebra $\fr{p}\subset \fr{m}$ of the compact Lie group $N_G(H^0)/H^0$ defines a bi-invariant metric on $N_G(H^0)/H^0$.
\end{lemma}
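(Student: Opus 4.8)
The plan is to use the g.o.\ condition from Proposition \ref{GOCond} restricted to the subalgebra $\fr{p}=\fr{n}_{\fr{g}}(\fr{h})/\fr{h}$, together with the identification of $\fr{p}$ given in Lemma \ref{Katak}. A bi-invariant metric on the compact Lie group $N_G(H^0)/H^0$ corresponds (via the isomorphism of Lie algebras $\fr{p}\cong \fr{n}_{\fr{g}}(\fr{h})/\fr{h}$, which is the Lie algebra of $N_G(H^0)/H^0$) to an $\op{ad}_{\fr{p}}$-invariant inner product on $\fr{p}$; so the goal reduces to showing that the symmetric positive-definite endomorphism $\left.\Lambda\right|_{\fr{p}}\in\op{End}(\fr{p})$, which by the block-diagonal form of metric endomorphisms indeed preserves $\fr{p}$, satisfies $Q([Z,\left.\Lambda\right|_{\fr{p}}W]+[\left.\Lambda\right|_{\fr{p}}Z,W]\,,\,V)=0$ for all $Z,W,V\in\fr{p}$; equivalently that $\left.\Lambda\right|_{\fr{p}}\circ\op{ad}_Z + \op{ad}_Z\circ\left.\Lambda\right|_{\fr{p}}$ is $Q$-skew on $\fr{p}$, which after using that $\op{ad}_Z$ is already $Q$-skew is the statement $[\left.\Lambda\right|_{\fr{p}}Z,W]\in\fr{p}$ consistently and $\left.\Lambda\right|_{\fr{p}}$ commutes with $\op{ad}_Z$ for all $Z\in\fr{p}$ — i.e.\ $\left.\Lambda\right|_{\fr{p}}$ is $\op{ad}_{\fr{p}}$-equivariant.

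The key steps, in order. First I would note that for $X\in\fr{p}$ we have $[\fr{h},X]=0$ and $[\fr{h},\Lambda X]=0$ as well (since $\Lambda X\in\fr{p}$), so $\fr{p}$ is a subalgebra on which $\fr{h}$ acts trivially; moreover for $a\in\fr{h}$ and $X\in\fr{p}$ the bracket $[a+X,\Lambda X]=[X,\Lambda X]$ is independent of $a$. Second, apply Proposition \ref{GOCond} to an arbitrary $X\in\fr{p}\setminus\{0\}$: there is $a\in\fr{h}$ with $[a+X,\Lambda X]=0$, hence $[X,\Lambda X]=0$ for every $X\in\fr{p}$. Third, I would polarize this identity: replacing $X$ by $X+Y$ with $X,Y\in\fr{p}$ and expanding, using $[X,\Lambda X]=0=[Y,\Lambda Y]$, gives $[X,\Lambda Y]+[Y,\Lambda X]=0$ for all $X,Y\in\fr{p}$. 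Fourth, I would translate this symmetry condition into $\op{ad}_{\fr{p}}$-invariance of the inner product $\langle X,Y\rangle=Q(\Lambda X,Y)$ on $\fr{p}$: using $Q$-skewness of $\op{ad}_X$ on $\fr{g}$ (hence on $\fr{p}$, which is $\op{ad}_{\fr{p}}$-invariant), compute $\langle[X,Y],Z\rangle+\langle Y,[X,Z]\rangle = Q(\Lambda[X,Y],Z)+Q(\Lambda Y,[X,Z])$, and manipulate this combination so that the polarized identity $[X,\Lambda Z]+[Z,\Lambda X]=0$ makes it vanish; this shows $\left.\langle\ ,\ \rangle\right|_{\fr{p}}$ is $\op{ad}_{\fr{p}}$-invariant, i.e.\ bi-invariant on $N_G(H^0)/H^0$.

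The main obstacle I expect is the bookkeeping in the fourth step: $\left.\langle\ ,\ \rangle\right|_{\fr{p}}$ being $\op{Ad}_H$-invariant and satisfying the polarized g.o.\ identity must be converted cleanly into genuine $\op{ad}$-invariance with respect to the bracket of $\fr{p}$ itself, and one must be careful that the Lie bracket on $N_G(H^0)/H^0$ agrees with the bracket induced on $\fr{p}=\fr{r}=\fr{n}_{\fr{g}}(\fr{h})\cap\fr{m}$ — here the fact from Lemma \ref{Katak} that $[\fr{r},\fr{h}]=\{0\}$ is exactly what guarantees that $[\fr{p},\fr{p}]\subseteq\fr{p}$ modulo $\fr{h}$ descends correctly, so that $\op{ad}$-invariance on $\fr{p}$ really does mean bi-invariance upstairs. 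A secondary subtlety is that $\fr{p}$ need not be $\op{ad}_{\fr{h}}$-irreducible, so $\left.\Lambda\right|_{\fr{p}}$ is not a priori scalar; but the polarized identity handles the general symmetric positive-definite case without needing irreducibility.
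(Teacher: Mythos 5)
Your proposal is correct, and its skeleton --- restrict the g.o.\ condition of Proposition \ref{GOCond} to vectors of $\fr{p}$, observe that the isotropy term $a$ is irrelevant there because $[\fr{h},\fr{p}]=\{0\}$, and conclude $[X,\Lambda X]=0$ for all $X\in\fr{p}$ --- is exactly the paper's. The two places where you diverge both make the argument more self-contained. First, the paper obtains $\Lambda\fr{p}\subseteq\fr{p}$ indirectly: it invokes Corollary \ref{NormalizerCorol} to get $\Lambda\fr{q}\subseteq\fr{q}$ for the complement $\fr{q}=T_o(G/N_G(H^0))$ and then uses $Q$-symmetry of $\Lambda$; you get it directly from $\op{ad}_{\fr{h}}$-equivariance of $\Lambda$ (for $X\in\fr{p}$ and $a\in\fr{h}$ one has $[a,\Lambda X]=\Lambda[a,X]=0$), which needs no input from Lemma \ref{NormalizerLemma}. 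Second, the paper stops at ``$\left.\Lambda\right|_{\fr{p}}$ defines a left-invariant g.o.\ metric on the compact group $N_G(H^0)/H^0$'' and cites \cite{AlNi} for the fact that such a metric is bi-invariant, whereas you reprove that fact by polarizing $[X,\Lambda X]=0$ to $[X,\Lambda Y]+[Y,\Lambda X]=0$. For the record, your fourth step does close: setting $A(X,Y,Z)=Q([X,Y],\Lambda Z)$ for $X,Y,Z\in\fr{p}$, the polarized identity together with the $Q$-symmetry of $\Lambda$ and the $\op{ad}$-invariance of $Q$ gives $A(X,Y,Z)=-A(Z,Y,X)$, and combined with the obvious antisymmetry in the first two slots this makes $A$ totally antisymmetric, which is precisely $\op{ad}_{\fr{p}}$-invariance of $\left.\langle \ ,\ \rangle\right|_{\fr{p}}$; your worry about the quotient bracket is harmless because the $\fr{h}$-component of $[X,Y]$, $X,Y\in\fr{p}$, lies in the center of $\fr{h}$ and is $Q$-orthogonal to $\fr{m}$, so it drops out of every pairing above. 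The trade-off: the paper's route is shorter and reuses machinery already established for other purposes, while yours eliminates the appeal to the classification of left-invariant g.o.\ metrics on Lie groups.
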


\begin{proof} The Lie algebra $\fr{n}_{\fr{g}}(\fr{h})$ of $N_G(H^0)$ admits a $Q$-orthogonal decomposition $\fr{n}_{\fr{g}}(\fr{h})=\fr{h}\oplus \fr{p}$, where $\fr{p}$ is the Lie algebra of $N_G(H^0)/H^0$ (see also Lemma \ref{Katak}).  We also have a $Q$-orthogonal decomposition $\fr{m}=\fr{p}\oplus \fr{q}$, where $\fr{q}$ can be identified with the tangent space of $G/N_G(H^0)$ at the origin.  Corollary \ref{NormalizerCorol} implies that $\left.\Lambda\right|_{\fr{q}}$ defines a g.o. metric on $G/N_G(H^0)$ and hence $\Lambda\fr{q}\subseteq \fr{q}$.  Along with the symmetry of $\Lambda$ with respect to $Q$, we obtain $Q(\Lambda\fr{p},\fr{q})=Q(\fr{p},\Lambda\fr{q})\subseteq Q(\fr{p},\fr{q})=\{0\}$.  The last equation along with decomposition $\fr{m}=\fr{p}\oplus \fr{q}$ yield $\Lambda\fr{p}\subseteq \fr{p}$, and hence the restriction $\left.\Lambda\right|_{\fr{p}}:\fr{p}\rightarrow \fr{p}$ defines a left-invariant metric on $N_G(H^0)/H^0$.  Since $\Lambda$ defines a g.o. metric on $G/H$, Proposition \ref{GOCond} implies that for any $X\in \fr{p}$ there exists a vector $a\in \fr{h}$ such that $0=[a+X,\Lambda X]=[a+X,\left.\Lambda\right|_{\fr{p}}X]$, and thus $\left.\Lambda\right|_{\fr{p}}$ defines a g.o. metric on $N_G(H^0)/H^0$.  But any left-invariant g.o. metric on a Lie group is necessarily bi-invariant (\cite{AlNi}), and thus $\left.\Lambda\right|_{\fr{p}}$ defines a bi-invariant metric on $N_G(H^0)/H^0$.\end{proof}

The importance of Lemma \ref{DualNormalizer} lies in the fact that the g.o. (i.e. the bi-invariant) metrics on compact Lie groups have a simple description.  Recall that if $G$ is a compact Lie group then its Lie algebra $\fr{g}$ has the direct sum decomposition $\fr{g}=\fr{g}_1\oplus \cdots \oplus \fr{g}_k\oplus \fr{z}(\fr{g})$, where $\fr{g}_j$ are the simple ideals of $\fr{g}$ and $\fr{z}(\fr{g})$ is its center.

\begin{lemma}\label{GOLieGroups}\emph{(\cite{DaZi}, \cite{So1})} 
Let $G$ be a compact Lie group with Lie algebra $\fr{g}=\fr{g}_1\oplus \cdots \oplus \fr{g}_k\oplus \fr{z}(\fr{g})$.  A left-invariant metric $g$ on $G$, with corresponding metric endomorphism $\Lambda$, is a g.o. metric (i.e. a bi-invariant metric) if and only if 

\begin{equation*}\Lambda=\begin{pmatrix} 
 \lambda_1\left.\op{Id}\right|_{\fr{g}_1} & 0 & \cdots &0\\
  \vdots & \ddots & \cdots &\vdots\\
  0&\cdots &\lambda_k\left.\op{Id}\right|_{\fr{g}_k} &0\\
  0& \cdots & 0 & \left.\Lambda\right|_{\fr{z}(\fr{g})}
  \end{pmatrix}, \ \lambda_j>0.
\end{equation*}

\end{lemma}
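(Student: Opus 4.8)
Since here $H=\{e\}$, we have $\fr h=\{0\}$ and $\fr m=\fr g$, and every inner product on $\fr g$ is trivially $\op{Ad}_{H}$-invariant; thus the metric endomorphisms are exactly the positive-definite $Q$-symmetric $\Lambda\in\op{End}(\fr g)$. The plan is to begin from Proposition \ref{GOCond}, which with $\fr h=\{0\}$ forces $a=0$ and therefore reads: $\Lambda$ defines a g.o.\ (equivalently, a bi-invariant) metric on $G$ if and only if
\begin{equation*}[X,\Lambda X]=0\qquad\text{for all }X\in\fr g.\end{equation*}
So the lemma reduces to the equivalence between this identity and the stated block form. For the ``if'' direction I would write $X=X_{1}+\cdots+X_{k}+X_{0}$ with $X_{j}\in\fr g_{j}$ and $X_{0}\in\fr z(\fr g)$; then $\Lambda X=\lambda_{1}X_{1}+\cdots+\lambda_{k}X_{k}+\Lambda X_{0}$, and $[X,\Lambda X]=0$ follows at once because distinct simple ideals commute, both $X_{0}$ and $\Lambda X_{0}$ are central, and $[X_{j},X_{j}]=0$.

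For the ``only if'' direction, the key step is to upgrade the single quadratic identity $[X,\Lambda X]=0$ to full $\op{ad}$-equivariance of $\Lambda$. First polarize, obtaining $[X,\Lambda Y]+[Y,\Lambda X]=0$ for all $X,Y\in\fr g$. Then consider the trilinear form $T(X,Y,Z):=Q([X,\Lambda Y],Z)$: the polarized identity says $T$ is skew-symmetric in its first two arguments, while the $\op{Ad}$-invariance of $Q$ together with the $Q$-symmetry of $\Lambda$ rewrite $T(X,Y,Z)=Q(Y,\Lambda[Z,X])$, from which one reads that $T$ is also skew-symmetric in its first and third arguments. Hence $T$ is totally skew-symmetric; comparing $T(X,Y,Z)$ with $-T(X,Z,Y)$ and using the $\op{Ad}$-invariance of $Q$ once more yields $Q\big([X,\Lambda Y]-\Lambda[X,Y],Z\big)=0$ for every $Z$, that is,
\begin{equation*}\Lambda[Z,W]=[Z,\Lambda W]\qquad\text{for all }Z,W\in\fr g.\end{equation*}

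With $\Lambda$ now $\op{ad}$-equivariant and $Q$-self-adjoint, I would conclude as follows. For central $W$ one has $[Z,\Lambda W]=\Lambda[Z,W]=0$, so $\Lambda$ preserves $\fr z(\fr g)$ and hence its $Q$-orthogonal complement $[\fr g,\fr g]=\fr g_{1}\oplus\cdots\oplus\fr g_{k}$; a short centralizer argument (for $j\ne i$ and $X\in\fr g_{j}$, $Y\in\fr g_{i}$ one has $[X,\Lambda Y]=\Lambda[X,Y]=0$, so $\Lambda\fr g_{i}$ lies in the centralizer of $\fr g_{j}$ in $\fr g$, which for simple $\fr g_{j}$ is $\bigoplus_{l\ne j}\fr g_{l}\oplus\fr z(\fr g)$) then forces $\Lambda\fr g_{j}\subseteq\fr g_{j}$ for each $j$. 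Restricted to $\fr g_{j}$, the symmetric bilinear form $(X,Y)\mapsto Q(\Lambda X,Y)$ is $\op{ad}_{\fr g_{j}}$-invariant, hence a scalar multiple of the Killing form $B_{j}$ of $\fr g_{j}$ (the space of invariant symmetric bilinear forms on a simple Lie algebra being one-dimensional); since $Q|_{\fr g_{j}}$ is itself a negative scalar multiple of $B_{j}$, we obtain $\left.\Lambda\right|_{\fr g_{j}}=\lambda_{j}\op{Id}$, with $\lambda_{j}>0$ forced by positive-definiteness of the metric. On $\fr z(\fr g)$ no constraint survives, which yields exactly the asserted block form.

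The substantive point, and the only place where genuine care is needed, is the passage in the second paragraph from the quadratic identity $[X,\Lambda X]=0$ to $\op{ad}$-equivariance: a naive polarization alone does not suffice, and one must combine it with the invariance of $Q$ to force total skew-symmetry of $T$. The remaining ingredients --- the $Q$-orthogonal splitting $\fr g=[\fr g,\fr g]\oplus\fr z(\fr g)$, the simplicity-based rigidity of invariant symmetric bilinear forms on each $\fr g_{j}$, and the centralizer computation --- are standard.
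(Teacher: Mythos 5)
Your proof is correct. Note that the paper does not prove this lemma at all --- it is quoted from \cite{DaZi} and \cite{So1} --- so there is no internal argument to compare against; but your derivation is exactly the standard one underlying those references: reduce via Proposition \ref{GOCond} (with $\fr h=\{0\}$) to the identity $[X,\Lambda X]=0$, polarize, and use the $\op{Ad}$-invariance of $Q$ together with the $Q$-symmetry of $\Lambda$ to force total skew-symmetry of $T(X,Y,Z)=Q([X,\Lambda Y],Z)$ and hence $\op{ad}$-equivariance of $\Lambda$, after which Schur-type rigidity on each simple ideal gives the block form. The one step you correctly flag as delicate --- that polarization alone is not enough and must be combined with the invariance of $Q$ --- is handled properly, and the centralizer argument pinning $\Lambda\fr g_j\subseteq\fr g_j$ is sound.
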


\subsection{Geodesic orbit spaces $(G/H,g)$ for some special subgroups $H$}\label{CategoriesSpecial}

Let $G$ be a compact, connected semisimple Lie group.  In studying the g.o. spaces of the form $G/H$, it is useful to consider several categories of subgroups $H$ of $G$ such that the g.o. metrics on $G/H$ have been completely described. In particular, the g.o. metrics on $G/H$ are explicitly known in the following cases:\\

\noindent \textbf{1.} The isotropy representation $\op{Ad}^{G/H}:H\rightarrow \op{Gl}(\fr{m})$ is irreducible.   \\
\noindent \textbf{2.} The space $G/H$ is simply connected and the isotropy representation $\op{Ad}^{G/H}:H\rightarrow \op{Gl}(\fr{m})$ decomposes into exactly two irreducible submodules. \\
\noindent \textbf{3.} The space $G/H$ is fibered over an irreducible symmetric space $G/K$ and is endowed with the corresponding fibration metric.  More specifically, there exists a Lie subgroup $K$ with $H\subset K \subset G$ such that: (i) $G/K$ is an irreducible symmetric space and (ii) the $G$-invariant metric on $G/H$ is induced (up to homothety) by an inner product of the form 

\begin{equation}\label{FibForm}g^{\lambda}=\left.(-Q)\right|_{\mathcal{M}_B\times \mathcal{M}_B}+\lambda\left.(-Q)\right|_{\mathcal{M}_F\times \mathcal{M}_F},\end{equation}

\noindent where $\mathcal{M}_B=T_o(G/K)$, $\mathcal{M}_F=T_o(K/H)$ and $Q$ is the Killing form of $\fr{g}$.\\ 
\noindent \textbf{4.} The space $G/H$ is simply connected and $\op{rank}(H)=\op{rank}(G)$ (or equivalently, $G/H$ has positive Euler characteristic).\\
\noindent \textbf{5.} The subgroup $H$ is abelian.\\
\noindent \textbf{6.} The dimension of $G/H$ is less than or equal to six.\\

\noindent More specifically, in Case \textbf{1.} any $G$-invariant metric on $G/H$ is normal and thus any $G$-invariant metric on $G/H$ is g.o.  The isotropy irreducible spaces are classified in \cite{Wo}.  For Case \textbf{2.}, there exist several non-normal g.o. spaces.  These are classified in \cite{CheNi}.  For Case \textbf{3.}, the corresponding spaces are classified in \cite{Ta}.  For Case \textbf{4.}, the corresponding spaces are classified in \cite{AlNi}.  For Case \textbf{5.}, it was recently proven by the author that any g.o. metric on $G/H$, where $G$ is a compact connected semisimple Lie group and $H$ is abelian, is necessarily normal (\cite{So2}).  Finally, the g.o. spaces $(G/H,g)$ with $\dim(G/H)\leq 6$ are classified in \cite{KoVa}.

 We recall that the universal cover of a homogeneous space $G/H$ is the homogeneous space $\widetilde{G}/\widetilde{H}$ where $\widetilde{G}$ is the universal covering group of $G$ and $\widetilde{H}$ is the identity component of $\pi^{-1}(H)$, where $\pi:\widetilde{G}\rightarrow G$ is the canonical projection (\cite{No}).  Moreover, the spaces $G/H$ and $\widetilde{G}/\widetilde{H}$ are locally diffeomorphic.  As cases \textbf{2.} and \textbf{4.} concern simply connected spaces, the following result is useful. 

\begin{prop}\label{UnivCover}
Let $G/H$, $\widetilde{G}/\widetilde{H}$ be homogeneous spaces with $\widetilde{H}$ connected and such that the Lie algebras of $G$ and $\widetilde{G}$ coincide and the Lie algebras of $H$ and $\widetilde{H}$ coincide. \\
\emph{(i)} Any $G$-invariant metric on $G/H$ defines a $\widetilde{G}$-invariant metric on $\widetilde{G}/\widetilde{H}$.\\
\emph{(ii)}  Any $G$-invariant g.o. metric on $G/H$ defines a $\widetilde{G}$-invariant g.o. metric on $\widetilde{G}/\widetilde{H}$.\\
\emph{(iii)} If any invariant g.o. metric on $\widetilde{G}/\widetilde{H}$ is normal (resp. standard) then any invariant g.o. metric on $G/H$ is also normal (resp. standard).\\
Moreover, the converse of each of the statements \emph{(i)} - \emph{(iii)} is true if $H$ is connected.
\end{prop}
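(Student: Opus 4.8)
The plan is to establish Proposition \ref{UnivCover} by exploiting the fact that $G/H$ and $\widetilde G/\widetilde H$ are locally isometric once we fix a common reductive decomposition, so that geodesics through the origin correspond under the local diffeomorphism. First I would set up the bookkeeping: since $\fr g=\fr{\widetilde g}$ and $\fr h=\fr{\widetilde h}$ as subalgebras, a single $\op{Ad}$-invariant inner product $Q$ on $\fr g$ serves both spaces, giving the same $Q$-orthogonal reductive decomposition $\fr g=\fr h\oplus\fr m$ and hence the same tangent space $\fr m\cong T_o(G/H)\cong T_o(\widetilde G/\widetilde H)$. The key point is that $\widetilde H$ is connected by hypothesis, so an $\op{Ad}_{\widetilde H}$-invariant inner product on $\fr m$ is the same thing as an $\op{ad}_{\fr h}$-invariant one; and since every $\op{Ad}_H$-invariant inner product is automatically $\op{ad}_{\fr h}$-invariant, any metric endomorphism $\Lambda\in\op{End}(\fr m)$ for $G/H$ is also a metric endomorphism for $\widetilde G/\widetilde H$. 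This immediately gives (i). For the converse of (i) when $H$ is connected, the same equivalence runs in reverse: $\op{Ad}_H$-invariance $\Leftrightarrow$ $\op{ad}_{\fr h}$-invariance $\Leftrightarrow$ $\op{Ad}_{\widetilde H}$-invariance.

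Next I would prove (ii). The cleanest route is to invoke the algebraic criterion Proposition \ref{GOCond}: $\Lambda$ defines a g.o. metric on $G/H$ iff for every $X\in\fr m\setminus\{0\}$ there is $a\in\fr h$ with $[a+X,\Lambda X]=0$. But this condition is phrased purely in terms of the Lie bracket of $\fr g$, the subspaces $\fr h,\fr m$, and $\Lambda$ — all of which are literally identical for $G/H$ and $\widetilde G/\widetilde H$. Hence $\Lambda$ satisfies the criterion for one space if and only if it does for the other, which proves (ii) together with its converse. (One should remark, to be safe, that Proposition \ref{GOCond} applies to $\widetilde G/\widetilde H$ as well; this is fine because $\widetilde G$ is still a connected Lie group and $\widetilde H$ closed — the criterion only needs a reductive homogeneous space, no compactness of $\widetilde G$.) Alternatively, and perhaps worth mentioning as the conceptual reason, a geodesic $\gamma(t)=\exp_G(tX)\cdot o$ in $G/H$ lifts under the local diffeomorphism $\widetilde G/\widetilde H\to G/H$ (which intertwines $\exp_{\widetilde G}$ with $\exp_G$ via $\pi$) to the curve $\widetilde\gamma(t)=\exp_{\widetilde G}(tX)\cdot\widetilde o$, and conversely; since the metrics agree under this local diffeomorphism, geodesics correspond to geodesics and the orbit property is preserved in both directions.

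Then (iii) follows formally. Suppose every invariant g.o. metric on $\widetilde G/\widetilde H$ is normal, and let $\Lambda$ define an invariant g.o. metric on $G/H$. By (ii), $\Lambda$ defines an invariant g.o. metric on $\widetilde G/\widetilde H$, hence is normal there: there is an $\op{Ad}$-invariant inner product $Q_0$ on $\fr g$ with $Q_0$-orthogonal decomposition $\fr g=\fr h\oplus\fr m'$ such that the associated endomorphism is a scalar multiple of the identity. But "normal" is again a condition about the data $(\fr g,\fr h,\Lambda)$ and an auxiliary $\op{Ad}$-invariant form on $\fr g$, with no reference to the global group beyond $\fr g$ itself, so the very same $Q_0$ and $\fr m'$ witness that $\Lambda$ is normal on $G/H$. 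The "standard" case is identical, taking $Q_0$ to be the negative of the Killing form of $\fr g$ (which is intrinsic to $\fr g$). The converse of (iii) when $H$ is connected is the same argument read backwards, using the converse of (ii).

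I expect the main obstacle to be purely expository rather than mathematical: making precise the phrase "the Lie algebras coincide" so that it genuinely supplies a common reductive decomposition and a common $\fr m$, and being careful that the connectedness of $\widetilde H$ is exactly what is needed to pass between $\op{Ad}_{\widetilde H}$-invariance and $\op{ad}_{\fr h}$-invariance (this is the only place the hypothesis "$\widetilde H$ connected" enters, and symmetrically "$H$ connected" is the only thing needed for the converses). A secondary subtlety worth a sentence is that $\widetilde G$ need not be compact, so one cannot quote results about compact homogeneous spaces for $\widetilde G/\widetilde H$ directly; but Proposition \ref{GOCond} and the definitions of normal/standard metrics are stated for general reductive homogeneous spaces, so nothing breaks. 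No serious computation is involved; the whole proposition is a "transport of structure along a local isometry" statement, and the proof should be correspondingly short.
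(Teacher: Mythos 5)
Your proposal is correct and follows essentially the same route as the paper: a common $Q$-orthogonal decomposition $\fr g=\fr h\oplus\fr m$ identifies the two tangent spaces, connectedness of $\widetilde H$ converts $\op{ad}_{\fr h}$-invariance (equivalently, skew-symmetry of the operators $\op{ad}_a$, $a\in\fr h$) into $\op{Ad}_{\widetilde H}$-invariance for (i), the purely Lie-algebraic criterion of Proposition \ref{GOCond} transfers the g.o.\ property for (ii), and (iii) follows formally since normality/standardness is witnessed by an $\op{Ad}$-invariant form on $\fr g$ alone. The extra remarks on lifting geodesics and on $\widetilde G$ possibly being non-compact are consistent with the paper, which likewise notes the result holds for arbitrary $G$.
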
 
 
\begin{proof}  For convenience, we assume that $G$ is compact although the result is true for any $G$.  Let $\fr{g}$ be the Lie algebra of $G$ and $\widetilde{G}$, and let $\fr{h}$ be the Lie algebra of $H$ and $\widetilde{H}$.  For an (possibly variable) $\op{Ad}$-invariant inner product $Q$ on $\fr{g}$, consider the $Q$-orthogonal decomposition $\fr{g}=\fr{h}\oplus \fr{m}_Q$.  Then $\fr{m}_Q$ can be identified with both tangent spaces $T_o(G/H)$ and $T_o(\widetilde{G}/\widetilde{H})$.  For part (i), any $G$-invariant metric $g$ on $G/H$ corresponds to an $\op{Ad}_H$-invariant inner product $\langle \ ,\ \rangle$ on $\fr{m}_{Q}$ and thus any operator $\op{ad}_a$, $a\in \fr{h}$, is skew-symmetric with respect to $\langle \ ,\  \rangle$.  Since $\widetilde{H}$ is connected, the skew-symmetry of $\op{ad}_{a}$ implies that $\langle \ ,\ \rangle$ is $\op{Ad}_{\widetilde{H}}$-invariant and thus defines a $\widetilde{G}$-invariant metric on $\widetilde{G}/\widetilde{H}$.

 For part (ii), assume that $g$ is a $G$-invariant g.o. metric on $G/H$ and let $\Lambda_Q\in \op{End(\fr{m}_Q)}$ be the corresponding metric endomorphism of $g$ satisfying Equation \eqref{MetEnd}.  By part (i), $\Lambda_Q$ defines a $\widetilde{G}$-invariant metric on $\widetilde{G}/\widetilde{H}$.  The fact that $\Lambda_Q$ defines a g.o. metric on $G/H$ along with Proposition \ref{GOCond} imply that $\Lambda_Q$ also defines a g.o. metric on $\widetilde{G}/\widetilde{H}$. For part (iii),  let $g$ be a $G$-invariant g.o. metric on $G/H$ with metric endomorphism $\Lambda_Q\in \op{End(\fr{m}_Q)}$.  Part (ii) implies that $\Lambda_Q$ defines a $\widetilde{G}$-invariant g.o. metric on $\widetilde{G}/\widetilde{H}$.  By assumption, $\Lambda_Q$ is normal. Therefore, $\Lambda_{Q_0}=\lambda\op{Id}$ for some $\op{Ad}$-invariant inner product $Q_0$ on $\fr{g}$, which in turn implies that $g$ is normal.   \end{proof}

\begin{corol}\label{UnivCover1}If any $\widetilde{G}$-invariant g.o. metric on the universal cover $\widetilde{G}/\widetilde{H}$ of $G/H$ is normal (resp. standard) then any $G$-invariant g.o. metric on $G/H$ is also normal (resp. standard).
\end{corol}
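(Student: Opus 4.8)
The plan is to obtain this immediately from Proposition~\ref{UnivCover}(iii), once we check that the pair $(G/H,\widetilde G/\widetilde H)$ satisfies its hypotheses. Recall that $\widetilde G$ denotes the universal covering group of $G$, with covering homomorphism $\pi\colon \widetilde G\to G$, and that $\widetilde H$ is \emph{defined} to be the identity component of $\pi^{-1}(H)$. In particular $\widetilde H$ is connected by construction, which is one of the two hypotheses of Proposition~\ref{UnivCover}.

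It then remains to verify that the Lie algebra of $\widetilde G$ coincides with $\fr g$ and that the Lie algebra of $\widetilde H$ coincides with $\fr h$. Since $\pi$ is a covering homomorphism it is in particular a local diffeomorphism, so its differential $d\pi_e\colon \op{Lie}(\widetilde G)\to \fr g$ is a Lie algebra isomorphism, and we use it to identify the two. The preimage $\pi^{-1}(H)$ is a closed subgroup of $\widetilde G$, and $\pi$ restricts to a covering map $\pi^{-1}(H)\to H$; hence $d\pi_e$ carries $\op{Lie}(\pi^{-1}(H))$ isomorphically onto $\fr h$. Finally $\widetilde H$, being the identity component of $\pi^{-1}(H)$, is an open subgroup of it and therefore has the same Lie algebra. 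Thus, under the above identifications, the Lie algebra of $\widetilde H$ is $\fr h$, so the second hypothesis of Proposition~\ref{UnivCover} holds as well.

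With both hypotheses in place, Proposition~\ref{UnivCover}(iii) applies directly and yields the claim for the ``normal'' case. For the ``standard'' case one notes in addition that the negative of the Killing form of $\fr g$ depends only on $\fr g$, hence it is literally the same $\op{Ad}$-invariant inner product on both sides; the argument of Proposition~\ref{UnivCover}(iii) then transports ``standard on $\widetilde G/\widetilde H$'' to ``standard on $G/H$'' without change.

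I do not expect any genuine obstacle here: the entire mathematical content sits in Proposition~\ref{UnivCover}, and the only points requiring care are the bookkeeping of the identification $d\pi_e$ and the elementary fact that passing to the identity component of a Lie group does not change its Lie algebra. If desired, one could also record that $\widetilde G/\widetilde H$ is genuinely simply connected — via the long exact homotopy sequence of $\widetilde H\hookrightarrow \widetilde G\to \widetilde G/\widetilde H$ together with $\pi_1(\widetilde G)=0$ and the connectedness of $\widetilde H$ — but this is not needed for the corollary, which only uses that $\widetilde G/\widetilde H$ is a homogeneous space fitting the hypotheses of Proposition~\ref{UnivCover}.
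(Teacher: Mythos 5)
Your proof is correct and follows exactly the route the paper intends: the corollary is stated as an immediate consequence of Proposition~\ref{UnivCover}(iii), since by construction $\widetilde{H}$ is connected and the covering homomorphism identifies the Lie algebras of $\widetilde{G}$ with $\fr{g}$ and of $\widetilde{H}$ with $\fr{h}$. Your extra bookkeeping about $d\pi_e$ and the identity component is a harmless elaboration of what the paper leaves implicit.
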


\section{Compact, semsimple Lie algebras of rank two}\label{PrelG2}

In this section we describe the root structure of compact semisimple Lie algebras $\fr{g}$ of rank two, the embeddings of $\fr{sl}_2\mathbb C$ in the complexified Lie algebra $\fr{g}^{\mathbb C}$ and eventually the embeddings of $\fr{h}=\fr{su}(2)$ in $\fr{g}$ along with the orthogonal complement $\fr{m}$ of $\fr{h}$ in $\fr{g}$.

\subsection{General root structure}\label{rootsection0}

For the results in this subsection we refer to \cite{Hel}.  Let $\fr{g}$ be a compact semisimple Lie algebra of rank two and let $\fr{g}^{\mathbb C}$ be its complexification.   Then $\fr{g}$ is isomorphic to one of the Lie algebras $\fr{su}(3)$, $\fr{su}(2)\oplus \fr{su}(2)=\fr{so}(4)$, $\fr{sp}(2)$, $\fr{g}_2$.  Accordingly, $\fr{g}^{\mathbb C}$ is isomorphic to one of the Lie algebras $A_2=\fr{sl}_3\mathbb C$, $A_1\times A_1=\fr{sl}_2\mathbb C\oplus \fr{sl}_2\mathbb C$, $C_2=\fr{sp}_4\mathbb C$, $\fr{g}_2^{\mathbb C}$. 

For each of the aforementioned algebras $\fr{g}$, let $R\subset \fr{t}^*$ be the root system of $\fr{g}^{\mathbb C}$ with respect to a Cartan subalgebra $\fr{t}$ of $\fr{g}^{\mathbb C}$, let $R^+$ be the set of positive roots and let $\Pi=\{\alpha,\beta\}$ be the set of simple roots.  We have the root decomposition $\fr{g}^{\mathbb C}=\fr{t}\oplus \sum_{\gamma\in R}\fr{g}^{\gamma}$, where

\begin{equation*}\fr{g}^{\gamma}=\{X\in \fr{g}^{\mathbb C}:[a,X]=\gamma(a)X\ \ \makebox{for all} \ \ a\in \fr{t}\}.\end{equation*}

\noindent Let $Q^{\mathbb C}$ and $Q$ denote the Killing forms of $\fr{g}^{\mathbb C}$ and $\fr{g}$ respectively.  The restriction of $Q^{\mathbb C}$ on $\fr{t}$ is non-degenerate and induces a dual form $( \ ,\ )$ on $\fr{t}^{*}$.  For $\gamma,\delta\in \fr{t}^*$, $\delta\neq 0$, set $\langle \gamma, \delta\rangle:=2\frac{(\gamma,\delta)}{(\delta,\delta)}$.  The Cartan matrix of $R$ is the matrix $\begin{pmatrix} \langle \alpha,\alpha\rangle & \langle \alpha,\beta\rangle\\ \langle \beta,\alpha \rangle &\langle \beta, \beta\rangle\end{pmatrix}$.  For any $\gamma\in \fr{t}^*$, let $t_{\gamma}$ be the corresponding covector, defined by the relation $Q^{\mathbb C}(t_{\gamma},a)=\gamma(a)$ for all $a\in \fr{t}$, and set $H_{\gamma}:=\frac{2t_{\gamma}}{(\gamma,\gamma)}$, $\gamma\neq 0$.  
    We consider root elements $E_{\gamma}\in \fr{g}^{\gamma}$ such that the set $\{H_{\alpha},H_{\beta},E_{\gamma}: \gamma\in R\}$ is a Chevalley basis of $\fr{g}^{\mathbb C}$.  The vectors $H_{\gamma},E_{\delta}$, $\gamma,\delta\in R$, satisfy the relations

\begin{equation}\label{bracket}[H_{\gamma},H_{\delta}]=0, \ \   [H_{\gamma},E_{\delta}]=\langle \delta, \gamma \rangle E_{\delta} \ \ \makebox{and} \ \ \ [E_{\gamma},E_{\delta}]=\left\{ \begin{array}{lll}  N_{\gamma,\delta}E_{\gamma+\delta},\ \ \mbox{if}\ \ \gamma+\delta \in R\\
H_{\gamma}, \ \ \makebox{if} \ \ \gamma+\delta=0\\
 0 , \quad \mbox{otherwise}
\end{array}
\right.,
\end{equation}

\noindent where $N_{\gamma,\delta}$ are integers satisfying $N_{\gamma,\delta}=-N_{\delta,\gamma}=-N_{-\gamma,-\delta}$, and $N_{\gamma,\delta}$ is non-zero if and only if $\gamma+\delta\in R$.  For $\gamma\in R^+$, we set 

\begin{equation*}F_{\gamma}:=E_{\gamma}-E_{-\gamma},\ \ G_{\gamma}:=\iu(E_{\gamma}+E_{-\gamma})\ \  \makebox{and}  \ \ \fr{m}_{\gamma}:=\op{span}_{\mathbb R}\{F_{\gamma},G_{\gamma}\}.\end{equation*}

\noindent The set 

\begin{equation*}\mathcal{B}:=\{\iu H_{\alpha},\iu H_{\beta},F_{\gamma},G_{\gamma}: \ \gamma \in R\},\end{equation*}

\noindent is basis of the compact real form $\fr{g}$ of $\fr{g}^{\mathbb C}$.  In particular, $\fr{g}$ admits the $Q$-orthogonal decomposition

\begin{equation}\label{decg_2}\fr{g}=\iu \fr{t} \oplus \bigoplus_{\gamma\in R^+}\fr{m}_{\gamma}, \ \ \makebox{where} \ \ \iu \fr{t}=\op{span}_{\mathbb R}\{\iu H_{\alpha},\iu H_{\beta}\}.
\end{equation}

\noindent The Lie bracket relations between the vectors $F_{\gamma}$ and $G_{\gamma}$ can be calculated from relations \eqref{bracket} and the values of the integers $N_{\gamma,\delta}$.  The relations are shown in the following lemma.

\begin{lemma}\label{BracketLem}
Let $\gamma,\delta \in R$ with $\gamma+\delta\neq 0$ and such that if $\gamma-\delta\in R$ then $\gamma-\delta >0$.  Then

\begin{align*}
[F_{\gamma},F_{\delta}]&= N_{\gamma,\delta}F_{\gamma+\delta}-N_{\gamma,-\delta}F_{\gamma-\delta},  & [F_{\gamma},G_{\delta}]&=N_{\gamma,\delta}G_{\gamma+\delta}+N_{\gamma,-\delta}G_{\gamma-\delta},\\[0pt]
[G_{\gamma},F_{\delta}]&=N_{\gamma,\delta}G_{\gamma+\delta}-N_{\gamma,-\delta}G_{\gamma-\delta}, & [G_{\gamma},G_{\delta}]&=-N_{\gamma,\delta}F_{\gamma+\delta}-N_{\gamma,-\delta}F_{\gamma-\delta},\\[0pt]
[F_{\gamma},G_{\gamma}]&=2\iu H_{\gamma}.
\end{align*}

\end{lemma}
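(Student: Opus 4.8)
The plan is to verify all six relations by direct computation, expanding each bracket bilinearly into the four brackets $[E_{\pm\gamma},E_{\pm\delta}]$ and then applying the Chevalley relations \eqref{bracket} together with the symmetry identities $N_{\gamma,\delta}=-N_{\delta,\gamma}=-N_{-\gamma,-\delta}$. As a bookkeeping convention I would first extend the definition of $F_{\mu},G_{\mu}$ to all roots $\mu\in R$ by the same formulas $F_{\mu}=E_{\mu}-E_{-\mu}$, $G_{\mu}=\iu(E_{\mu}+E_{-\mu})$, so that $F_{-\mu}=-F_{\mu}$ and $G_{-\mu}=G_{\mu}$; this makes the right-hand sides meaningful irrespective of the sign of $\gamma\pm\delta$, and the hypothesis that $\gamma-\delta>0$ whenever $\gamma-\delta\in R$ is then exactly the normalization identifying $F_{\gamma-\delta},G_{\gamma-\delta}$ with the basis vectors of decomposition \eqref{decg_2}. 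I would also adopt the standard convention that a summand $N_{\mu,\nu}E_{\mu+\nu}$ (hence $N_{\mu,\nu}F_{\mu+\nu}$ and $N_{\mu,\nu}G_{\mu+\nu}$) is read as $0$ when $\mu+\nu\notin R$, which is consistent since then $N_{\mu,\nu}=0$. For the first four relations we may assume in addition that $\gamma\neq\delta$: the case $\gamma=-\delta$ is excluded by $\gamma+\delta\neq 0$, while $\gamma=\delta$ gives $[F_{\gamma},F_{\gamma}]=[G_{\gamma},G_{\gamma}]=0$ trivially and the mixed bracket is the content of the last relation.

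Next I would record the two consequences of the symmetry identities that are actually used, namely $N_{-\gamma,-\delta}=-N_{\gamma,\delta}$ and $N_{-\gamma,\delta}=-N_{\gamma,-\delta}$ (the latter by applying $N_{a,b}=-N_{-a,-b}$ with $a=-\gamma$, $b=\delta$). Since $\gamma+\delta\neq 0$ and $\gamma\neq\delta$, none of the four brackets $[E_{\pm\gamma},E_{\pm\delta}]$ falls into the Cartan case of \eqref{bracket}, so \eqref{bracket} yields $[E_{\gamma},E_{\delta}]=N_{\gamma,\delta}E_{\gamma+\delta}$, $[E_{\gamma},E_{-\delta}]=N_{\gamma,-\delta}E_{\gamma-\delta}$, $[E_{-\gamma},E_{\delta}]=N_{-\gamma,\delta}E_{\delta-\gamma}=-N_{\gamma,-\delta}E_{-(\gamma-\delta)}$, and $[E_{-\gamma},E_{-\delta}]=N_{-\gamma,-\delta}E_{-\gamma-\delta}=-N_{\gamma,\delta}E_{-(\gamma+\delta)}$. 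Substituting these into $[F_{\gamma},F_{\delta}]=[E_{\gamma},E_{\delta}]-[E_{\gamma},E_{-\delta}]-[E_{-\gamma},E_{\delta}]+[E_{-\gamma},E_{-\delta}]$ and regrouping, each pair of terms assembles into a combination $E_{\mu}-E_{-\mu}=F_{\mu}$ with $\mu\in\{\gamma+\delta,\gamma-\delta\}$, giving $[F_{\gamma},F_{\delta}]=N_{\gamma,\delta}F_{\gamma+\delta}-N_{\gamma,-\delta}F_{\gamma-\delta}$. The expansions of $[F_{\gamma},G_{\delta}]$, $[G_{\gamma},F_{\delta}]$, $[G_{\gamma},G_{\delta}]$ proceed identically; the only differences are the sign changes carried by the factor $\iu$ occurring in $G$, which make the pairs assemble into $E_{\mu}+E_{-\mu}$ and hence into $G_{\mu}$, and, in the $[G_{\gamma},G_{\delta}]$ case, the product $\iu^{2}=-1$, which reconverts the $G$'s into $-F$'s, yielding the remaining three formulas. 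For the last relation I would expand $[F_{\gamma},G_{\gamma}]=\iu\big([E_{\gamma},E_{\gamma}]+[E_{\gamma},E_{-\gamma}]-[E_{-\gamma},E_{\gamma}]-[E_{-\gamma},E_{-\gamma}]\big)$ and now use the Cartan case of \eqref{bracket}, $[E_{\gamma},E_{-\gamma}]=H_{\gamma}$, together with antisymmetry, to get $\iu(H_{\gamma}+H_{\gamma})=2\iu H_{\gamma}$.

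There is no genuine conceptual obstacle: the whole content is sign bookkeeping. The only points demanding care are the systematic use of the two $N$-symmetries to rewrite $N_{-\gamma,\delta}$ and $N_{-\gamma,-\delta}$ in terms of $N_{\gamma,\delta}$ and $N_{\gamma,-\delta}$, and the setting up of the conventions above (the extension of $F_{\mu},G_{\mu}$ to negative roots and non-roots, and the reading of a structure constant with a non-root index as zero) so that each stated formula is literally correct; the hypotheses $\gamma+\delta\neq 0$ and ``$\gamma-\delta>0$ if $\gamma-\delta\in R$'' are precisely what is needed to keep the generic computation in the non-Cartan case of \eqref{bracket} and to pin down the naming of $F_{\gamma-\delta}$ and $G_{\gamma-\delta}$.
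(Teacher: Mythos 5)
Your proposal is correct and is exactly the computation the paper has in mind: the paper offers no written proof, merely noting that the relations "can be calculated from relations \eqref{bracket} and the values of the integers $N_{\gamma,\delta}$," and your bilinear expansion with the $N$-symmetries, the sign conventions $F_{-\mu}=-F_{\mu}$, $G_{-\mu}=G_{\mu}$, and the separate treatment of $\gamma=\delta$ supplies that calculation carefully.
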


\noindent Consequently, the spaces $\fr{m}_{\gamma}$, $\gamma\in R^+$, satisfy the relation

\begin{equation}\label{ma}
[\fr{m}_{\gamma},\fr{m}_{\delta}]\subseteq \left\{ \begin{array}{ll}  \fr{m}_{\gamma+\delta}\oplus \fr{m}_{|\gamma-\delta|},\ \ \mbox{if}\ \ \gamma+\delta \in R \ \ \makebox{or} \ \ \gamma-\delta \in R\\
\left\{ {0} \right\}, \quad \mbox{otherwise}
\end{array}
\right..
\end{equation}

 Using relations \eqref{bracket} we obtain $Q^{\mathbb C}(E_{\gamma},E_{\delta})=\frac{1}{\langle \gamma,\gamma\rangle}Q^{\mathbb C}([H_{\gamma},E_{\gamma}],E_{\delta})=\frac{1}{2}Q^{\mathbb C}(H_{\gamma},[E_{\gamma},E_{\delta}])$ which, along with the fact that $Q^{\mathbb C}(H_{\gamma},H_{\delta})=\frac{4}{(\gamma,\gamma)(\delta,\delta)}Q^{\mathbb C}(t_{\gamma},t_{\delta})=\frac{4(\gamma,\delta)}{(\gamma,\gamma)(\delta,\delta)}$, implies that

\begin{equation*}\label{Kilform}Q^{\mathbb C}(E_{\gamma},E_{\delta})=\left\{ \begin{array}{ll}  \frac{2}{(\gamma,\gamma)}\ \ \mbox{if}\ \ \gamma+\delta=0\\
0, \quad \mbox{otherwise}
\end{array}
\right..
\end{equation*} 

\noindent As a result and given that $\left.Q^{\mathbb C}\right|_{\fr{g}\times \fr{g}}=Q$, we have $Q(F_{\gamma},G_{\delta})=0$ and

\begin{equation}\label{Kilform1}Q(F_{\gamma},F_{\delta})=\left\{ \begin{array}{ll}  -\frac{4}{(\gamma,\gamma)},\ \ \mbox{if}\ \ \gamma=\delta\\
0 , \quad \mbox{otherwise}
\end{array}
\right., \ \  Q(G_{\gamma},G_{\delta})=\left\{ \begin{array}{ll}  -\frac{4}{(\gamma,\gamma)},\ \ \mbox{if} \ \ \gamma=\delta\\
 0, \quad \mbox{otherwise}
\end{array}
\right.. 
\end{equation}

\subsection{The root system of complex semisimple Lie algebras $\fr{g}^{\mathbb C}$ of rank two}\label{rootsection}  \leavevmode \\

 If $\fr{g}^{\mathbb C}=A_2$, we have $R=\{\pm \alpha,\pm \beta, \pm(\alpha+\beta)\}$, with $R^+=\{\alpha,\beta,\alpha+\beta\}$.  The Cartan matrix of $R$ is $\begin{pmatrix}  2 & -1\\ -1 &2 \end{pmatrix}$, from which we may assume without any loss of generality that\\

 $(\alpha,\alpha)=1=\frac{1}{4}Q^{\mathbb C}(H_{\alpha},H_{\alpha})$, \ $(\alpha,\beta)=-\frac{1}{2}=\frac{1}{4}Q^{\mathbb C}(H_{\alpha},H_{\beta})$ \ and $(\beta,\beta)=1=\frac{1}{4}Q^{\mathbb C}(H_{\beta},H_{\beta})$. \\
 
 \noindent Moreover, $N_{\alpha,\beta}=N_{\beta,-(\alpha+\beta)}=N_{-(\alpha+\beta),\alpha}=1$.\\

If $\fr{g}^{\mathbb C}=A_1\times A_1$, we have $R=\{\pm \alpha,\pm \beta\}$, with $R^+=\{\alpha,\beta\}$.  The Cartan matrix of $R$ is $\begin{pmatrix}  2 & 0\\ 0 &2 \end{pmatrix}$, from which we may assume that \\

$(\alpha,\alpha)=1=\frac{1}{4}Q^{\mathbb C}(H_{\alpha},H_{\alpha})$, \  $(\alpha,\beta)=0=Q^{\mathbb C}(H_{\alpha},H_{\beta})$ \ and $(\beta,\beta)=1=\frac{1}{4}Q^{\mathbb C}(H_{\beta},H_{\beta})$. \\ \\

 If $\fr{g}^{\mathbb C}=C_2$, we have $R=\{\pm \alpha,\pm \beta, \pm(\alpha+\beta),\pm(\alpha+2\beta)\}$, with $R^+=\{\alpha,\beta,\alpha+\beta,\alpha+2\beta\}$.  The Cartan matrix of $R$ is $\begin{pmatrix}  2 & -2\\ -1 &2 \end{pmatrix}$, from which we may assume that\\
 
  $(\alpha,\alpha)=1=\frac{1}{4}Q^{\mathbb C}(H_{\alpha},H_{\alpha})$, \ $(\alpha,\beta)=-\frac{1}{2}=\frac{1}{8}Q^{\mathbb C}(H_{\alpha},H_{\beta})$ \ and $(\beta,\beta)=\frac{1}{2}=\frac{1}{16}Q^{\mathbb C}(H_{\beta},H_{\beta})$. \\
  
  \noindent Moreover, $N_{\alpha,\beta}=N_{-(\alpha+\beta),\alpha}=N_{-(\alpha+2\beta),\alpha+\beta}=N_{\beta,-(\alpha+2\beta)}=1$ and $N_{\beta,-(\alpha+\beta)}=N_{\alpha+\beta, \beta}=2$.\\ \\

  Finally, if $\fr{g}^{\mathbb C}=\fr{g}_2$ we have $R=\{\pm \alpha, \pm \beta, \pm (\alpha+\beta),\pm (2\alpha+\beta), \pm (3\alpha+\beta),\pm (3\alpha+2\beta)\}$ with \\
 $R^+=\{\alpha, \beta, \alpha+\beta, 2\alpha+\beta, 3\alpha+\beta, 3\alpha+2\beta\}$.  The Cartan matrix of $R$ is $\begin{pmatrix}  2 & -1\\ -3 &2 \end{pmatrix}$, from which we may assume that\\
 
  $(\alpha,\alpha)=1=\frac{1}{4}Q^{\mathbb C}(H_{\alpha},H_{\alpha})$, \ $(\alpha,\beta)=-\frac{3}{2}=\frac{3}{4}Q^{\mathbb C}(H_{\alpha},H_{\beta})$\  and $(\beta,\beta)=3=\frac{9}{4}Q^{\mathbb C}(H_{\beta},H_{\beta})$. Moreover, we have (see also \cite{Ma})

\begin{eqnarray*}
N_{\beta,\alpha}&=&N_{\beta,3\alpha+\beta}=N_{3\alpha+\beta,-(3\alpha+2\beta)}=N_{2\alpha+\beta,-(3\alpha+\beta)}=N_{2\alpha+\beta,-(3\alpha+2\beta)}\\
&=&N_{-(3\alpha+2\beta),\alpha+\beta}=N_{-(3\alpha+2\beta),\beta}=N_{-(3\alpha+\beta),\alpha}=N_{-(\alpha+\beta),\beta}=1,\\ 
N_{\alpha+\beta,\alpha}&=&N_{\alpha,-(2\alpha+\beta)}=N_{-(2\alpha+\beta),\alpha+\beta}=2, \ \ N_{\alpha,2\alpha+\beta}=N_{\alpha,-(\alpha+\beta)}=N_{\alpha+\beta,2\alpha+\beta}=3.
\end{eqnarray*}

\subsection{Embeddings of $\fr{su}(2)$ in $\fr{g}$.}  Assume that $\fr{h}=\fr{su}(2)$. The following table shows the embeddings of $\fr{h}^{\mathbb C}=A_1=\fr{sl}_2\mathbb C$ in $\fr{g}^{\mathbb C}$ (up to conjugation by inner automorphisms), where $\fr{g}$ is a semisimple Lie algebra of rank two.  The main sources for this table are \cite{DoRe}, \cite{DoRe0} and \cite{Ma}.

\begin{center}
{\renewcommand{\arraystretch}{0.8}
\begin{tabular}{|c|c|}
\hline $\fr{h}^{\mathbb C}=\fr{sl}_2\mathbb C$ &  $\fr{g}^{\mathbb C}$ $\vphantom{\displaystyle{A^{B^{C}}}}$\\
\hline  $\op{span}_{\mathbb C}\{E_{\alpha+\beta},E_{-(\alpha+\beta)},H_{\alpha}+H_{\beta}\}$ &  $A_2$  $\vphantom{\displaystyle{A^{B^{C^{D}}}}}$\\
\hline  $\op{span}_{\mathbb C}\{E_{\alpha}+E_{\beta},E_{-\alpha}+E_{-\beta}, H_{\alpha}+H_{\beta}\}$ &  $A_2$  $\vphantom{\displaystyle{A^{B^{C^{D}}}}}$\\
\hline  $\op{span}_{\mathbb C}\{E_{\alpha},E_{-\alpha},H_{\alpha}\}$ &  $A_1\times A_1$  $\vphantom{\displaystyle{A^{B^{C^{D}}}}}$\\
\hline  $\op{span}_{\mathbb C}\{E_{\beta},E_{-\beta},H_{\beta}\}$ &  $A_1\times A_1$  $\vphantom{\displaystyle{A^{B^{C^{D}}}}}$\\
\hline  $\op{span}_{\mathbb C}\{E_{\alpha}+E_{\beta},E_{-\alpha}+E_{-\beta},H_{\alpha}+H_{\beta}\}$ &  $A_1\times A_1$  $\vphantom{\displaystyle{A^{B^{C^{D}}}}}$\\
\hline  $\op{span}_{\mathbb C}\{E_{\alpha+2\beta},E_{-(\alpha+2\beta)},H_{\alpha}+H_{\beta}\}$ &  $C_2$  $\vphantom{\displaystyle{A^{B^{C^{D}}}}}$\\
\hline  $\op{span}_{\mathbb C}\{E_{\alpha+\beta},E_{-(\alpha+\beta)},2H_{\alpha}+H_{\beta}\}$ &  $C_2$  $\vphantom{\displaystyle{A^{B^{C^{D}}}}}$\\
\hline  $\op{span}_{\mathbb C}\{E_{\alpha}+E_{\beta},4E_{-\alpha}+3E_{-\beta},4H_{\alpha}+3H_{\beta}\}$ &  $C_2$  $\vphantom{\displaystyle{A^{B^{C^{D}}}}}$\\
\hline  $\op{span}_{\mathbb C}\{E_{\alpha},E_{-\alpha},H_{\alpha}\}$ &  $\fr{g}_2^{\mathbb C}$  $\vphantom{\displaystyle{A^{B^{C^{D}}}}}$\\
\hline  $\op{span}_{\mathbb C}\{E_{\beta},E_{-\beta},H_{\beta}\}$ &  $\fr{g}_2^{\mathbb C}$  $\vphantom{\displaystyle{A^{B^{C^{D}}}}}$\\
\hline  $\op{span}_{\mathbb C}\{\sqrt{2}(E_{3\alpha+2\beta}+E_{-\beta}),\sqrt{2}(E_{\beta}+E_{-(3\alpha+2\beta)}),2H_{3\alpha+\beta}\}$ &  $\fr{g}_2^{\mathbb C}$  $\vphantom{\displaystyle{A^{B^{C^{D}}}}}$\\
\hline  $\op{span}_{\mathbb C}\{\sqrt{6}E_{\alpha}+\sqrt{10}E_{\beta},\sqrt{6}E_{-\alpha}+\sqrt{10}E_{-\beta},14H_{9\alpha+5\beta}\}$ &  $\fr{g}_2^{\mathbb C}$  $\vphantom{\displaystyle{A^{B^{C^{D}}}}}$\\
\hline 
\end{tabular} \nopagebreak \\ \nopagebreak {\em Table~I: Embeddings of $\fr{sl}_2\mathbb C$ in complex semisimple Lie algebras $\fr{g}^{\mathbb C}$ of rank two, up to conjugation by inner automorphisms.} $\vphantom{\displaystyle\frac{a}{2}}$

}
\end{center}

The following table shows the corresponding embedding of the compact Lie algebra $\fr{h}$ (up to conjugation by inner automorphisms) in the compact real form $\fr{g}$ of $\fr{g}^{\mathbb C}$.  Moreover, using the values $(\alpha,\alpha)$, $(\alpha,\beta)$ and $(\beta,\beta)$ given in subsection \ref{rootsection},  the definition of the vectors $F_{\gamma},G_{\gamma}$, relations \eqref{Kilform1}, and by taking into account the decomposition \eqref{decg_2} of $\fr{g}$, we explicitly describe the $Q$-orthogonal complement $\fr{m}$ of $\fr{h}$ in $\fr{g}$ in terms of the generating set $\{\iu H_{\gamma},F_{\gamma},G_{\gamma}: \gamma\in R\}$ of $\fr{g}$. 

\begin{center}
{\renewcommand{\arraystretch}{1.5}
\begin{tabular}{|c|c|c|}
\hline $\fr{h}=\fr{su}(2)$    & $\fr{m}=\fr{h}^{\bot}$  &  $\fr{g}$  $\vphantom{\displaystyle{A^{B^{C}}}}$\\
\hline  $\op{span}_{\mathbb R}\{\iu(H_{\alpha}+H_{\beta})\}\oplus \fr{m}_{\alpha+\beta}$   & $\op{span}_{\mathbb R}\{\iu(H_{\alpha}-H_{\beta})\}\oplus \fr{m}_{\alpha}\oplus \fr{m}_{\beta}$ & $\fr{su}(3)$  $\vphantom{\displaystyle{A^{B^{C^{D}}}}}$\\
\hline  $\op{span}_{\mathbb R}\{F_{\alpha}+F_{\beta},G_{\alpha}+G_{\beta},\iu (H_{\alpha}+H_{\beta})\}$ & $\op{span}_{\mathbb R}\{F_{\alpha}-F_{\beta},G_{\alpha}-G_{\beta},\iu(H_{\alpha}-H_{\beta})\}\oplus \fr{m}_{\alpha+\beta}$ & $\fr{su}(3)$ $\vphantom{\displaystyle{A^{B^{C^{D}}}}}$\\
\hline  $\op{span}_{\mathbb R}\{\iu H_{\alpha}\}\oplus \fr{m}_{\alpha}$ & $\op{span}_{\mathbb R}\{\iu H_{\beta}\}\oplus \fr{m}_{\beta}$ & $\fr{so}(4)$ $\vphantom{\displaystyle{A^{B^{C^{D}}}}}$\\
\hline  $\op{span}_{\mathbb R}\{\iu H_{\beta}\}\oplus \fr{m}_{\beta}$ & $\op{span}_{\mathbb R}\{\iu H_{\alpha}\}\oplus \fr{m}_{\alpha}$ & $\fr{so}(4)$ $\vphantom{\displaystyle{A^{B^{C^{D}}}}}$\\
\hline  $\op{span}_{\mathbb R}\{F_{\alpha}+F_{\beta},G_{\alpha}+G_{\beta},\iu (H_{\alpha}+H_{\beta})\}$ & $\op{span}_{\mathbb R}\{F_{\alpha}-F_{\beta}, G_{\alpha}-G_{\beta}, \iu (H_{\alpha}-H_{\beta})\}$ & $\fr{so}(4)$ $\vphantom{\displaystyle{A^{B^{C^{D}}}}}$\\
\hline  $\op{span}_{\mathbb R}\{\iu(H_{\alpha}+H_{\beta})\}\oplus \fr{m}_{\alpha+2\beta}$   & $\op{span}_{\mathbb R}\{\iu H_{\alpha}\}\oplus \fr{m}_{\alpha}\oplus \fr{m}_{\beta}\oplus \fr{m}_{\alpha+\beta}$ & $\fr{sp}(2)$  $\vphantom{\displaystyle{A^{B^{C^{D}}}}}$\\
\hline  $\op{span}_{\mathbb R}\{\iu(2H_{\alpha}+H_{\beta})\}\oplus \fr{m}_{\alpha+\beta}$   & $\op{span}_{\mathbb R}\{\iu H_{\beta}\}\oplus \fr{m}_{\alpha}\oplus \fr{m}_{\beta}\oplus \fr{m}_{\alpha+2\beta}$ & $\fr{sp}(2)$  $\vphantom{\displaystyle{A^{B^{C^{D}}}}}$\\
\hline  $\op{span}_{\mathbb R}\{2F_{\alpha}+\sqrt{3}F_{\beta},2G_{\alpha}+\sqrt{3}G_{\beta},$ & $\op{span}_{\mathbb R}\{\sqrt{3}F_{\alpha}-F_{\beta}, \sqrt{3}G_{\alpha}-G_{\beta}, \iu (2H_{\alpha}-H_{\beta})\}$ & $\fr{sp}(2)$ $\vphantom{\displaystyle{A^{B^{C^{D}}}}}$\\
 $\iu (4H_{\alpha}+3H_{\beta})\}$ &  $\oplus \fr{m}_{\alpha+\beta}\oplus \fr{m}_{\alpha+2\beta}$ & \\
\hline  $\op{span}_{\mathbb R}\{\iu H_{\alpha}\}\oplus \fr{m}_{\alpha}$ & $\op{span}_{\mathbb R}\{\iu H_{3\alpha+2\beta}\}\oplus \fr{m}_{\beta}\oplus \fr{m}_{\alpha+\beta}$ $\vphantom{\displaystyle{A^{B^{C^{D}}}}}$ & $\fr{g}_2$\\
  $\vphantom{\displaystyle{A^{B^{C^{D}}}}}$ & $\oplus \fr{m}_{2\alpha+\beta}\oplus \fr{m}_{3\alpha+\beta}\oplus \fr{m}_{3\alpha+2\beta}$ & $\vphantom{\displaystyle{A^{B^{C^{D}}}}}$ \\
  \hline  $\op{span}_{\mathbb R}\{\iu H_{\beta}\}\oplus \fr{m}_{\beta}$ & $\op{span}_{\mathbb R}\{\iu H_{2\alpha+\beta}\}\oplus \fr{m}_{\alpha}\oplus \fr{m}_{\alpha+\beta}$ $\vphantom{\displaystyle{A^{B^{C^{D}}}}}$ & $\fr{g}_2$\\
  $\vphantom{\displaystyle{A^{B^{C^{D}}}}}$ & $\oplus \fr{m}_{2\alpha+\beta}\oplus \fr{m}_{3\alpha+\beta}\oplus \fr{m}_{3\alpha+2\beta}$ & $\vphantom{\displaystyle{A^{B^{C^{D}}}}}$ \\
  \hline  $\op{span}_{\mathbb R}\{\sqrt{2}(F_{3\alpha+2\beta}-F_{\beta}),\sqrt{2}(G_{3\alpha+2\beta}+G_{\beta}),$ & $\op{span}_{\mathbb R}\{\sqrt{2}(F_{3\alpha+2\beta}+F_{\beta}),\sqrt{2}(G_{3\alpha+2\beta}-G_{\beta}),$ & $\fr{g}_2$ \\
  $2\iu H_{3\alpha+\beta}\}$ & $2\iu H_{\alpha+\beta}\}\oplus \fr{m}_{\alpha}\oplus \fr{m}_{\alpha+\beta}\oplus \fr{m}_{2\alpha+\beta}\oplus \fr{m}_{3\alpha+\beta}$& \\
\hline  $\op{span}_{\mathbb R}\{\sqrt{6}F_{\alpha}+\sqrt{10}F_{\beta},\sqrt{6}G_{\alpha}+\sqrt{10}G_{\beta},$ & $\op{span}_{\mathbb R}\{\sqrt{10}F_{\alpha}-3\sqrt{6}F_{\beta},\sqrt{10}G_{\alpha}-3\sqrt{6}G_{\beta},$ & $\fr{g}_2$ \\
  $14\iu H_{9\alpha+5\beta}\}$ & $2\iu H_{\alpha-\beta}\}\oplus \fr{m}_{\alpha+\beta}\oplus \fr{m}_{2\alpha+\beta}\oplus \fr{m}_{3\alpha+\beta}\oplus \fr{m}_{3\alpha+2\beta}$& \\  
  \hline
\end{tabular} \nopagebreak \\ \nopagebreak {\em Table~II: Embeddings of $\fr{h}=\fr{su}(2)$ in compact semisimple Lie algebras $\fr{g}$ of rank two, up to conjugation by inner automorphisms.} $\vphantom{\displaystyle\frac{a}{2}}$

}
\end{center}

\section{Proof of Theorem \ref{main}}\label{proof}

Let $G$ be a compact Lie group of rank two and let $\fr{g}$ be the Lie algebra of $G$.  For a subgroup $H$ of $G$, let $\fr{h}$ denote its Lie algebra. We will firstly reduce Theorem \ref{main} to the study of those spaces $(G/H,g)$ with $G$ semisimple, simply connected and $\fr{h}=\fr{su}(2)$.

\subsection{Reduction to the case $\fr{h}=\fr{su}(2)$.}

   Assume initially that $G$ is not semisimple.  Since $G$ is reductive and has rank two,  either $G$ is abelian or $\fr{g}=\fr{su}(2)\oplus \op{span}_{\mathbb R}\{X\}=\fr{u}(2)$, where $\op{span}_{\mathbb R}\{X\}$ is the one-dimensional center of $\fr{g}$.  In the first case, any left-invariant metric on $G$ is bi-invariant. Therefore, any $G$-invariant Riemannian metric on a homogeneous space $G/H$ is a normal metric and hence a g.o. metric.  In the second case, and given that $\fr{su}(2)$ does not contain any 2-dimensional Lie subalgebras, the proper Lie subalgebras of $\mathfrak{g}$ are precisely (up to conjugation) the subalgebras $\{0\},\op{span}_{\mathbb R}\{X\}$, $\fr{su}(2)$, $\op{span}_{\mathbb R}\{Y\}\oplus \op{span}_{\mathbb R}\{X\}$, $\op{span}_{\mathbb R}\{Y\}$ or $\op{span}_{\mathbb R}\{X+Y\}$, for some $Y\in \fr{su}(2)$.  The corresponding spaces $G/H$ are locally diffeomorphic to the simply connected spaces $SU(2)\times \mathbb R$, $SU(2)$, $\mathbb R$, $SU(2)/SO(2)$, $(SU(2)/SO(2))\times \mathbb R$ or $U(2)/U(1)$ respectively.  The first three spaces are Lie groups and hence any g.o. metric is bi-invariant, i.e. normal.  The next two spaces are symmetric and thus any invariant metric is normal.  By part (iii) of Proposition \ref{UnivCover}, any g.o metric on $G/H$ is also normal for the first five spaces. Finally, the last space, the Berger sphere $U(2)/U(1)$, is geodesic orbit with respect to any $U(2)$-invariant metric (\cite{Nik0}). The latter metrics have the form 
   
   \begin{equation*}g^{\lambda}=\lambda\left.(-Q)\right|_{\mathcal{M}_F\times \mathcal{M}_F}+\left.(-Q)\right|_{\mathcal{M}_B\times \mathcal{M}_B},\end{equation*}

\noindent where $Q(X,Y)=\op{Tr}(XY)$, $X,Y\in \fr{u}(2)$, $\mathcal{M}_F=T_o\big((U(1)\times U(1)/ U(1)\big)=T_o(S^1)$ and $\mathcal{M}_B=T_o\big(U(2)/(U(1)\times U(1))\big)=T_o(\mathbb C P^1)$.  Hence the metrics $g^{\lambda}$ are deformations of the standard metric $g^1$ along the fiber $\mathbb C P^1$ of the Hopf fibration of $S^3=U(2)/U(1)$ on $\mathbb C P^1$. The space $U(2)/U(1)$ yields part (i) of Theorem \ref{main}. 
   
 Now assume that $G$ is semisimple and let $(G/H,g)$ be a g.o space such that $G/H$ is simply connected.  We recall that the universal cover of $G/H$ is the homogeneous space $\widetilde{G}/\widetilde{H}$, where $\widetilde{G}$ is the universal covering group of $G$ and $\widetilde{H}$ is the identity component of $\pi^{-1}(H)$, where $\pi:\widetilde{G}\rightarrow G$ is the canonical projection.  Moreover, the universal covering group of a compact semisimple Lie group is also compact and semisimple.  Therefore, it suffices to assume that $G$ is simply connected, compact and semisimple, and $H$ is connected. 
 
  Since $H$ is a closed subgroup of $G$, it is compact.  Hence $\fr{h}$ is a compact Lie algebra.  Since the rank of $G$ is two, the dimension of a maximal abelian subalgebra (and thus a Cartan subalgebra) of $\fr{h}$ is at most two.  On the other hand, since $\fr{h}$ is a compact Lie algebra it is also reductive and thus $\fr{h}=\fr{s}\oplus \fr{z}$ where $\fr{s}$ is semisimple and $\fr{z}$ is abelian. If $\fr{h}$ does not have maximal rank then $\fr{h}$ has rank at most one and thus $\fr{s}=\{0\}$ or $\fr{z}=\{0\}$.  In the former case, $\fr{h}$ is abelian of rank (and hence dimension) at most one.  In the latter case, $\fr{h}$ is compact semisimple of rank one, and thus it is isomorphic to $\fr{su}(2)$. Therefore, one of the following cases occurs for $\fr{h}$:\\

\noindent Case I) $\fr{h}$ has maximal rank.\\
Case II) The dimension of $\fr{h}$ is at most one.\\
Case III) $\fr{h}=\fr{su}(2)$.\\

In Case I), $H$ has maximal rank in $G$.  From the main theorem in \cite{AlNi} it follows that the only compact, simply connected spaces $G/H$ with $G$ semisimple of rank two and $\op{rank}(H)=\op{rank}(G)$ that admit non-standard g.o. metrics are the flag manifolds $SO(5)/U(2)$ and $Sp(2)/(U(1)\times Sp(1))$, which are diffeomorphic (\cite{AlAr}).  Moreover, the $Sp(2)$-invariant metrics on $Sp(2)/(U(1)\times Sp(1))$ are induced from inner products of the form

\begin{equation*}g^{\lambda}=\lambda\left.(-Q)\right|_{\mathcal{M}_F\times \mathcal{M}_F} +\left.(-Q)\right|_{\mathcal{M}_B\times \mathcal{M}_B},\end{equation*}

\noindent where $Q$ is the Killing form of $\fr{sp}(2)$, $\mathcal{M}_F=T_o\big((Sp(1)\times Sp(1)/(Sp(1)\times U(1))\big)=T_o(Sp(1)/U(1))=T_o(\mathbb C P^1)$ and $\mathcal{M}_B=T_o\big(Sp(2)/(Sp(1)\times Sp(1))\big)=T_o(\mathbb H P^1)$ (see for example \cite{AlAr}, \cite{BeNis} or \cite{Ta}).  Hence the metrics $g^{\lambda}$ are deformations of the standard metric $g^1$ along the fiber $\mathbb C P^1$ of the Hopf fibration of $\mathbb C P^3=Sp(2)/(Sp(1)\times U(1))$ on $\mathbb H P^1$.  Any metric $g^{\lambda}$ is a g.o. metric.  The space $\big(Sp(2)/(U(1)\times Sp(1)),g^{\lambda}\big)$ yields part (iii) of Theorem \ref{main}.  

In Case II), $H$ is abelian and the main theorem in \cite{So2} implies that any g.o. metric on $G/H$ is normal.  Therefore, it remains to investigate Case III). We will do so for each of the distinct cases $\fr{g}=\fr{su}(3)$, $\fr{so}(4)$, $\fr{sp}(2)$, $\fr{g}_2$, for which we refer to Table~II. 

\subsection{The case $\fr{h}=\fr{su}(2)$ and $\fr{g}=\fr{su}(3)$.}  The corresponding simply connected group is $G=SU(3)$.  According to Table~II, there are two distinct embeddings of $\fr{h}$ in $\fr{g}$ up to conjugation by inner automorphisms. 

 Firstly, assume that $\fr{h}=\op{span}_{\mathbb R}\{\iu(H_{\alpha}+H_{\beta})\}\oplus \fr{m}_{\alpha+\beta}$.  The corresponding simply connected space $G/H$ is the weakly symmetric sphere $S^5=SU(3)/SU(2)$.  Taking into account the explicit expression of $\fr{m}$ in Table~II, the definition of the spaces $\fr{m}_{\gamma}$, relations \eqref{ma} as well as the Lie bracket relations in Lemma \ref{BracketLem} along with the values of the integers $N_{\gamma,\delta}$ in subsection \ref{rootsection}, we deduce that the isotropy algebra representation $\op{ad}^{\fr{g}/\fr{h}}:\fr{h}\rightarrow \op{End}(\fr{m})$ (see subsection \ref{IsotSub}) admits the isotypic decomposition $\fr{m}=\fr{p}\oplus \fr{q}$, where $\fr{p}=\op{span}_{\mathbb R}\{\iu (H_{\alpha}-H_{\beta})\}$ and $\fr{q}=\fr{m}_{\alpha}\oplus \fr{m}_{\beta}$.  The submodules $\fr{p}$ and $\fr{q}$ are $\op{ad}_{\fr{h}}$-irreducible and hence any $G$-invariant metric on $G/H$ is induced (up to homothety) from an inner product $g^{\lambda}$ of the form

 \begin{equation*}g^{\lambda}=\lambda\left.(-Q)\right|_{\fr{p}\times \fr{p}}+\left.(-Q)\right|_{\fr{q}\times \fr{q}}.\end{equation*}

 \noindent We remark that the metrics $g^{\lambda}$ exhaust the both the $SU(3)$-invariant and the $U(3)$-invariant metrics on $S^5=SU(3)/SU(2)=U(3)/U(2)$ (\cite{Nik0}).  The latter are deformation metrics corresponding to the fibration $U(3)/U(2)\rightarrow U(3)/(U(2)\times U(1))$ with fiber $U(1)$. Here $\fr{p}=T_o(U(1))=T_oS^1$, $\fr{q}=T_o\big(U(3)/(U(2)\times U(1))\big)=T_o\mathbb C P^2$ and thus the metrics $g^{\lambda}$ can be considered as a one-parameter family of deformations of the standard metric $g^1$ along the fiber $S^1$ of the Hopf fibration $S^5\rightarrow \mathbb C P^2$.  According to \cite{KoVa} (Theorem 4.4) or \cite{Nik0}, any of the above metrics is a g.o. metric. 
 
 Secondly, assume that $\fr{h}=\op{span}_{\mathbb R}\{F_{\alpha}+F_{\beta},G_{\alpha}+G_{\beta},\iu (H_{\alpha}+H_{\beta})\}$.  Taking into account the explicit expression of $\fr{m}$ in Table~II, and using the same reasoning as above, we deduce that the isotropy representation $\op{ad}^{\fr{g}/\fr{h}}:\fr{h}\rightarrow \op{End}(\fr{m})$ is irreducible.  In fact, $G/H$ is the symmetric space $SU(3)/SO(3)$.  Hence any $G$-invariant metric on $G/H$ is the standard metric. 
 
 We conclude that the only non-normal simply connected g.o. space of the form $(SU(3)/H, g)$, with $\fr{h}=\fr{su}(2)$, is the sphere $(SU(3)/SU(2),g^{\lambda})$.  This yields part (ii) of Theorem \ref{main}.

 \subsection{The case $\fr{h}=\fr{su}(2)$ and $\fr{g}=\fr{so}(4)=\fr{su}(2)\oplus \fr{su}(2)$.}  According to Table~II, there are three distinct embeddings of $\fr{h}$ in $\fr{g}$ up to conjugation by inner automorphisms.  If $\fr{h}=\op{span}_{\mathbb R}\{\iu H_{\alpha}\}\oplus \fr{m}_{\alpha}$ or $\fr{h}=\op{span}_{\mathbb R}\{\iu H_{\beta}\}\oplus \fr{m}_{\beta}$ then $\fr{m}=\fr{su}(2)$ and the corresponding simply connected space $G/H$ is diffeomorphic to $SU(2)$.  In that case, any g.o. metric is bi-invariant and hence normal.  If $\fr{h}=\op{span}_{\mathbb R}\{F_{\alpha}+F_{\beta},G_{\alpha}+G_{\beta},\iu (H_{\alpha}+H_{\beta})\}$ then $\fr{m}$ is $\op{ad}_{\fr{h}}$-irreducible, the corresponding simply connected space $G/H$ is diffeomorphic to the sphere $SO(4)/SO(3)$ and any $G$-invariant metric on $G/H$ is normal.  We conclude that for any g.o. space $(G/H, g)$, the metric $g$ is normal. 

\subsection{The case $\fr{h}=\fr{su}(2)$ and $\fr{g}=\fr{sp}(2)$.}  The corresponding simply connected group is $G=Sp(2)$.  According to Table~II, there are three distinct embeddings of $\fr{h}$ in $\fr{g}$ up to conjugation by inner automorphisms.

 Firstly, assume that $\fr{h}=\op{span}_{\mathbb R}\{\iu(H_{\alpha}+H_{\beta})\}\oplus \fr{m}_{\alpha+2\beta}$.  The corresponding simply connected  space $G/H$ is the sphere $S^7=Sp(2)/Sp(1)$.  Taking into account the explicit expression of $\fr{m}$ in Table~II, relations \eqref{ma} as well as the Lie bracket relations in Lemma \ref{BracketLem} along with the values of the integers $N_{\gamma,\delta}$ in subsection \ref{rootsection},
we deduce that the isotropy representation $\op{ad}^{\fr{g}/\fr{h}}:\fr{h}\rightarrow \op{End}(\fr{m})$ admits the isotypic decomposition $\fr{m}=\fr{p}\oplus \fr{q}$ where $\fr{p}=\fr{n}_{\fr{g}}(\fr{h})/\fr{h}=\op{span}_{\mathbb R}\{\iu H_{\alpha}\}\oplus \fr{m}_{\alpha}=\fr{su}(2)$ and $\fr{q}=\fr{m}_{\beta}\oplus\fr{m}_{\alpha+\beta}$.  
 
  Assume that $\Lambda\in \op{End}(\fr{m})$ is a metric endomorphism corresponding to a g.o. metric on $G/H$.  The isotypic component $\fr{q}$ is $\op{ad}_{\fr{h}}$-irreducible and thus $\left.\Lambda\right|_{\fr{q}}=\mu\op{Id}$.  Moreover, Lemma \ref{DualNormalizer} implies that $\left.\Lambda\right|_{\fr{p}}$ defines a bi-invariant metric on $N_G(H)/H$.  Given that $\fr{p}$ is a simple Lie algebra, Lemma \ref{GOLieGroups} yields $\left.\Lambda\right|_{\fr{p}}=\lambda\op{Id}$.  We conclude that a g.o. metric on $G/H$ is necessarily induced (up to homothety) from an inner product of the form 
 
\begin{equation*}g^{\lambda}=\lambda\left.(-Q)\right|_{\fr{p}\times \fr{p}}+\left.(-Q)\right|_{\fr{q}\times \fr{q}}.\end{equation*}

\noindent We have $\fr{q}=T_o\big(Sp(2)/(Sp(1)\times Sp(1))\big)=T_o(\mathbb H P^1)$, $\fr{p}=T_o\big(Sp(1)\times Sp(1)/Sp(1)\big)=T_o(Sp(1))=T_o(S^3)$ and hence the metrics $g^{\lambda}$ are deformations of the standard metric $g^1$ along the fibers $S^3$ of the Hopf fibration of $S^7$ on the symmetric space $\mathbb H P^1$.  According to \cite{Nik0}, Section 3, the metrics $g^{\lambda}$ exhaust the $Sp(2)\times Sp(1)$-invariant metrics on $Sp(2)/Sp(1)$ and they are geodesic orbit (see Theorem 1 in \cite{Nik0} or \cite{Ta}). 

Now assume that $\fr{h}=\op{span}_{\mathbb R}\{\iu(2H_{\alpha}+H_{\beta})\}\oplus \fr{m}_{\alpha+\beta}$.  Taking into account the explicit expression of $\fr{m}$ in Table~II, relations \eqref{ma} as well as the Lie bracket relations in Lemma \ref{BracketLem} along with the values of the integers $N_{\gamma,\delta}$ in subsection \ref{rootsection}, we deduce that the isotropy representation $\op{ad}^{\fr{g}/\fr{h}}:\fr{h}\rightarrow \op{End}(\fr{m})$ admits the isotypic decomposition $\fr{m}=\fr{p}\oplus \fr{q}$ where $\fr{p}=\op{span}_{\mathbb R}\{\iu H_{\beta}\}$ and $\fr{q}=\fr{m}_{\alpha}\oplus \fr{m}_{\beta}\oplus \fr{m}_{\alpha+2\beta}$.  Moreover, both components $\fr{p},\fr{q}$ are $\op{ad}_{\fr{h}}$-irreducible.  Taking into account the list of the non-normal simply connected geodesic orbit spaces with two irreducible isotropy summands in \cite{CheNi}, and in particular Theorem 2, we deduce that there is no such space with $G=Sp(2)$ and $\fr{h}=\fr{su}(2)$.  Hence any g.o. metric on the corresponding space $G/H$ is normal (and in fact standard).

Finally, assume that $\fr{h}=\op{span}_{\mathbb R}\{2F_{\alpha}+\sqrt{3}F_{\beta},2G_{\alpha}+\sqrt{3}G_{\beta},\iu (4H_{\alpha}+3H_{\beta})\}$.  Again taking into account the explicit expression of $\fr{m}$ in Table~II, relations \eqref{ma} as well as the Lie bracket relations in Lemma \ref{BracketLem} along with the values of the integers $N_{\gamma,\delta}$ in subsection \ref{rootsection}, we deduce that $\fr{m}$ is $\op{ad}_{\fr{h}}$-irreducible and hence the space $G/H$ is isotropy irreducible (in fact it is the isotropy irreducible space $Sp(2)/SU(2)$ \cite{Wo}).  As a result, any $G$-invariant metric on $G/H$ is standard.

We conclude that the only non-normal simply connected g.o. space of the form $(Sp(2)/H, g)$, with $\fr{h}=\fr{su}(2)$, is the sphere $(Sp(2)/Sp(1),g^{\lambda})$.  This yields part (iiv) of Theorem \ref{main}.  It remains to show that for any simply connected space of the form $(G_2/H,g)$ with $\fr{h}=\fr{su}(2)$, the metric $g$ is normal (i.e. standard).

\subsection{The case $\fr{h}=\fr{su}(2)$ and $\fr{g}=\fr{g}_2$.}  The corresponding simply connected group is $G=G_2$.  According to Table~II, there are four distinct embeddings of $\fr{h}$ in $\fr{g}$ up to conjugation by inner automorphisms.  Before we examine them case by case, we will need the following preliminary result.  

\begin{prop}\label{mainargument}
Let $H$ be a connected subgroup of $G_2$ with Lie algebra $\fr{h}=\fr{su}(2)$ and consider the $Q$-orthogonal reductive decomposition $\fr{g}_2=\fr{h}\oplus \fr{m}$.  Assume that the isotypic component $\fr{p}=\{X\in \fr{m}: [a,X]=0\ \ \makebox{for all} \ \ a\in \fr{h}\}$ of the isotropy representation $\op{ad}^{\fr{g}_2/\fr{h}}:\fr{h}\rightarrow \op{End}(\fr{m})$ is isomorphic to $\fr{su}(2)$.  Then the only Riemannian $G_2$-invariant g.o. metric on $G_2/H$ is the standard metric.
\end{prop}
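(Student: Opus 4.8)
The plan is to trap any candidate g.o.\ metric between two situations where the metric is already understood --- the bi-invariant metrics on $N_{G_2}(H)/H$ and the invariant metrics on the symmetric space $G_2/SO(4)$ --- thereby reducing $\Lambda$ to two parameters, and then to force those two parameters to coincide by testing the g.o.\ criterion of Proposition~\ref{GOCond} on a single well-chosen ``diagonal'' vector.

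\emph{Reducing the normalizer.} First I would use Lemma~\ref{Katak}: $\fr{p}=\fr{n}_{\fr{g}_2}(\fr{h})/\fr{h}$, so $\fr{p}$ is a subalgebra of $\fr{g}_2$ with $[\fr{h},\fr{p}]=\{0\}$, and since moreover $\fr{p}\cong\fr{su}(2)$ is simple, $\fr{k}:=\fr{n}_{\fr{g}_2}(\fr{h})=\fr{h}\oplus\fr{p}$ is a direct sum of ideals; hence $\fr{k}\cong\fr{su}(2)\oplus\fr{su}(2)\cong\fr{so}(4)$, and $K:=N_{G_2}(H)$ (recall $H$ is connected) has Lie algebra $\fr{so}(4)$. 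Set $\fr{q}:=\fr{m}\ominus\fr{p}$, so that $\fr{g}_2=\fr{k}\oplus\fr{q}$, $\dim\fr{q}=14-6=8$, and $\fr{q}=T_o(G_2/K)$. Here I would invoke the standard facts that, up to conjugation, this $\fr{so}(4)$ sits in $\fr{g}_2$ as the symmetric subalgebra of the irreducible, rank two, symmetric space $G_2/SO(4)$.

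\emph{Shape of $\Lambda$ and the final step.} Let $\Lambda\in\op{End}(\fr{m})$ be the metric endomorphism of a $G_2$-invariant g.o.\ metric. By Lemma~\ref{DualNormalizer}, $\left.\Lambda\right|_{\fr{p}}$ defines a bi-invariant metric on $N_{G_2}(H)/H$, and since $\fr{p}$ is a simple Lie algebra, Lemma~\ref{GOLieGroups} gives $\left.\Lambda\right|_{\fr{p}}=\lambda\op{Id}$ for some $\lambda>0$. By Corollary~\ref{NormalizerCorol}, $\left.\Lambda\right|_{\fr{q}}$ is a metric endomorphism on $G_2/K$; as $\fr{q}$ is the isotropy module of the irreducible symmetric space $G_2/SO(4)$ it is $\op{ad}_{\fr{so}(4)}$-irreducible, so Schur's lemma forces $\left.\Lambda\right|_{\fr{q}}=\mu\op{Id}$ for some $\mu>0$. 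Thus $\Lambda=\lambda\op{Id}_{\fr{p}}\oplus\mu\op{Id}_{\fr{q}}$, and $g$ is standard precisely when $\lambda=\mu$; so I assume $\lambda\neq\mu$ and seek a contradiction. The key is to pick $W\in\fr{q}$ cleverly: take a maximal abelian subspace $\fr{a}\subset\fr{q}$; since $G_2/SO(4)$ has rank two, $\dim\fr{a}=2=\op{rank}(\fr{g}_2)$, so $\fr{a}$ is an abelian subalgebra of $\fr{g}_2$ of maximal dimension, i.e.\ a Cartan subalgebra, whence its centralizer $\fr{z}_{\fr{g}_2}(\fr{a}):=\{Y\in\fr{g}_2:[Y,\fr{a}]=0\}$ equals $\fr{a}$. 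Now pick $W\in\fr{a}$ regular in $\fr{g}_2$ (a generic element of $\fr{a}$), so that $\fr{z}_{\fr{g}_2}(W)=\fr{a}$. Then $\fr{z}_{\fr{k}}(W)\subseteq\fr{a}\cap\fr{k}\subseteq\fr{q}\cap\fr{k}=\{0\}$, hence $\dim[\fr{k},W]=6$; since $[\fr{k},W]=[\fr{h},W]+[\fr{p},W]$ with each summand $3$-dimensional (the relevant stabilizers lying in $\fr{z}_{\fr{k}}(W)=\{0\}$), the sum is direct, i.e.\ $[\fr{h},W]\cap[\fr{p},W]=\{0\}$. I would then test Proposition~\ref{GOCond} on $X=P+W$ with $P\in\fr{p}\setminus\{0\}$: using $[\fr{h},\fr{p}]=\{0\}$, $[P,P]=[W,W]=0$, and $\Lambda X=\lambda P+\mu W$, one computes, for any $a\in\fr{h}$,
\begin{equation*}[a+X,\Lambda X]=\mu[a,W]+(\mu-\lambda)[P,W].\end{equation*}
If this vanishes then $[a,W]=\tfrac{\lambda-\mu}{\mu}[P,W]$; but the left side lies in $[\fr{h},W]$ while the right side is a nonzero element of $[\fr{p},W]$ (nonzero because $\lambda\neq\mu$, $P\neq0$ and $\fr{z}_{\fr{p}}(W)=\{0\}$), contradicting $[\fr{h},W]\cap[\fr{p},W]=\{0\}$. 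Hence no $a\in\fr{h}$ satisfies \eqref{cor} for this $X$, so $g$ is not g.o.\ unless $\lambda=\mu$; therefore $\Lambda=\mu\op{Id}_{\fr{m}}$ and $g$ is standard.

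\emph{The main obstacle.} The structural heart of the argument lies in the first two steps: identifying $\fr{n}_{\fr{g}_2}(\fr{h})$ with $\fr{so}(4)$ and recognizing $(\fr{g}_2,\fr{so}(4))$ as the irreducible symmetric pair --- this is what legitimizes the appeal to Schur and supplies the rank two maximal abelian subspace --- and these should be read off Table~II together with the classification of symmetric spaces. Once that is in place, the remaining genericity argument --- notably recognizing $\fr{a}$ as a Cartan subalgebra of $\fr{g}_2$ and exploiting $\fr{q}\cap\fr{k}=\{0\}$ --- is short and self-contained.
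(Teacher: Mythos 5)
Your proposal is correct, and it follows the paper's proof exactly up to the point where $\Lambda=\lambda\op{Id}_{\fr{p}}\oplus\mu\op{Id}_{\fr{q}}$: the identification $\fr{p}=\fr{n}_{\fr{g}_2}(\fr{h})/\fr{h}$ via Lemma \ref{Katak}, the scalar form of $\left.\Lambda\right|_{\fr{p}}$ via Lemmas \ref{DualNormalizer} and \ref{GOLieGroups}, and the scalar form of $\left.\Lambda\right|_{\fr{q}}$ via the recognition of $\fr{h}\oplus\fr{p}\cong\fr{so}(4)$ as the (unique, up to conjugation) symmetric subalgebra --- a fact the paper isolates as Lemma \ref{Uniq}, whereas you cite it as a standard fact --- all match. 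Where you genuinely diverge is the last step: the paper observes that the resulting metric has the fibration form \eqref{FibForm} over the irreducible symmetric space $G_2/SO(4)$ and then reads off $\lambda=\mu$ from Tamaru's classification table in \cite{Ta}, while you instead test the g.o.\ criterion of Proposition \ref{GOCond} directly on $X=P+W$ with $W$ a regular element of a maximal abelian subspace $\fr{a}\subset\fr{q}$. Your computation $[a+X,\Lambda X]=\mu[a,W]+(\mu-\lambda)[P,W]$ is right, and the dimension count $\dim[\fr{k},W]=6=\dim[\fr{h},W]+\dim[\fr{p},W]$ (using that $\fr{a}$ is a Cartan subalgebra of $\fr{g}_2$ because $G_2/SO(4)$ has rank two, so that $\fr{z}_{\fr{k}}(W)\subseteq\fr{a}\cap\fr{k}=\{0\}$) does force $[\fr{h},W]\cap[\fr{p},W]=\{0\}$ and hence $\lambda=\mu$. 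The trade-off is clear: the paper's route is shorter but leans on an external classification, whereas yours is self-contained and elementary, at the price of invoking the rank of $G_2/SO(4)$ and a regular-element genericity argument; both are sound.
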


To prove Proposition \ref{mainargument}, we will need the following lemma.

\begin{lemma}\label{Uniq}
Up to conjugation by inner automorphisms, there is a unique subalgebra of $\fr{g}_2$ isomorphic to $\fr{su}(2)\oplus \fr{su}(2)=\fr{so}(4)$.
\end{lemma}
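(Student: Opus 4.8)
The plan is to classify the subalgebras of $\fr{g}_2$ isomorphic to $\fr{su}(2)\oplus\fr{su}(2)=\fr{so}(4)$ via the classification of $\fr{su}(2)$-triples already recorded in Table~I and Table~II. A subalgebra $\fr{k}\cong\fr{su}(2)\oplus\fr{su}(2)$ has rank two, so its Cartan subalgebra is a Cartan subalgebra of $\fr{g}_2$; we may therefore assume $\fr{k}\supset\iu\fr{t}$. Each of the two $\fr{su}(2)$ ideals of $\fr{k}$ is a three-dimensional simple subalgebra containing a one-dimensional toral part of $\iu\fr{t}$, and the complexification gives an $\fr{sl}_2\mathbb C$-triple in $\fr{g}_2^{\mathbb C}$. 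First I would use the list of $\fr{sl}_2\mathbb C$ embeddings in $\fr{g}_2^{\mathbb C}$ from Table~I to enumerate the candidate simple ideals, keeping only those whose semisimple (toral) element lies in $\fr{t}$ and which are generated by root vectors $E_{\pm\gamma}$ for a single root $\gamma$ — that is, the two embeddings $\op{span}_{\mathbb C}\{E_{\alpha},E_{-\alpha},H_\alpha\}$ and $\op{span}_{\mathbb C}\{E_{\beta},E_{-\beta},H_\beta\}$ associated to short and long roots respectively. (The other, ``higher'', $\fr{sl}_2$ embeddings in Table~I are not generated by a single root pair and cannot be ideals of a rank-two reductive subalgebra whose Cartan subalgebra is $\fr{t}$.)

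Next I would impose the requirement that the two ideals commute. If $\gamma,\delta$ are the roots generating the two ideals $\fr{su}(2)_\gamma,\fr{su}(2)_\delta$, then $[\fr{su}(2)_\gamma,\fr{su}(2)_\delta]=0$ forces, via the bracket relations \eqref{bracket} and Lemma \ref{BracketLem}, that $\langle\gamma,\delta\rangle=0$ (so $H_\gamma$ centralizes $E_{\pm\delta}$) and that none of $\gamma\pm\delta$ is a root. Inspecting the $\fr{g}_2$ root system $R=\{\pm\alpha,\pm\beta,\pm(\alpha+\beta),\pm(2\alpha+\beta),\pm(3\alpha+\beta),\pm(3\alpha+2\beta)\}$ listed in subsection~\ref{rootsection}, there is (up to the Weyl group) exactly one pair of orthogonal roots with the property that their sum and difference are non-roots, namely a short root together with the long root orthogonal to it — concretely $\{\alpha,\ 3\alpha+2\beta\}$, since $(\alpha,3\alpha+2\beta)=3(\alpha,\alpha)+2(\alpha,\beta)=3-3=0$ and $\alpha\pm(3\alpha+2\beta)\notin R$. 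This yields a single $\fr{su}(2)\oplus\fr{su}(2)$ subalgebra up to conjugation by the Weyl group (realized by inner automorphisms), which is the long/short $A_1\times A_1$ sitting inside $\fr{g}_2$ as the isometry subalgebra of the ``regular'' $SU(2)\times SU(2)$.

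The one subtlety, and the main obstacle, is ruling out a rank-two $\fr{so}(4)$ whose Cartan subalgebra is not $\fr{t}$ but only conjugate to it — this is handled by first conjugating $\fr{k}$ so that its Cartan subalgebra coincides with $\fr{t}$, which is always possible since any two Cartan subalgebras of the compact Lie algebra $\fr{g}_2$ are conjugate under inner automorphisms — and ruling out that one of the simple ideals could be a non-principal $\fr{sl}_2$ not of root type (e.g. the embedding $\op{span}_{\mathbb C}\{\sqrt2(E_{3\alpha+2\beta}+E_{-\beta}),\dots\}$ from Table~I, which has Dynkin index $>1$); such an ideal cannot commute with a second three-dimensional ideal inside the $14$-dimensional $\fr{g}_2$ for dimension and centralizer reasons — the centralizer in $\fr{g}_2$ of a non-root $\fr{su}(2)$ is too small to contain another $\fr{su}(2)$, which one verifies from the branching of the adjoint representation of $\fr{g}_2$ under that subalgebra. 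I would cite \cite{DoRe} (and Table~I) for the list of $\fr{sl}_2$-embeddings and their indices to make this last step clean, after which uniqueness up to inner conjugation follows.
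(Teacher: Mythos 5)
Your proof is correct, but it takes a genuinely different route from the paper's. The paper's argument is shorter and leans on two classification results: by Borel--de Siebenthal the maximal subalgebras of $\fr{g}_2$ of maximal rank are $\fr{su}(2)\oplus\fr{su}(2)$ and $\fr{su}(3)$ up to inner conjugation, so a second, non-conjugate copy of $\fr{so}(4)$ would have to sit inside $\fr{su}(3)$, which is impossible because (Dynkin) the regular subalgebras of $A_2$ are only of type $A_{k_1}+\cdots+A_{k_s}$ with $\sum(k_i+1)\leq 3$, excluding $A_1+A_1$. You instead carry out the regular-subalgebra analysis directly in the $G_2$ root system: conjugate the Cartan of $\fr{k}$ into $\iu\fr{t}$, observe that each ideal must be a root $\fr{sl}_2$, and enumerate the orthogonal, strongly orthogonal root pairs, finding a single Weyl orbit represented by $\{\alpha,3\alpha+2\beta\}$ (your computations $(\alpha,3\alpha+2\beta)=0$ and $\alpha\pm(3\alpha+2\beta)\notin R$ check out, as does transitivity of the Weyl group on such pairs). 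Both proofs pivot on the same observation that a rank-two subalgebra contains a Cartan of $\fr{g}_2$ and is therefore regular; yours is more self-contained and explicit (it also exhibits the subalgebra, which the paper's does not), while the paper's is shorter at the cost of citing \cite{BoSi} and \cite{Dy}. Two small points: the cleanest justification that each ideal is a root $\fr{sl}_2$ is not ``filtering Table~I'' (which lists embeddings only up to conjugacy) but the fact that each ideal is $\op{ad}(\fr{t})$-invariant, hence a sum of root spaces plus its one-dimensional toral part, forcing it to be $\mathbb{C}H_\gamma\oplus\fr{g}^{\gamma}\oplus\fr{g}^{-\gamma}$; and once that is in place your entire final paragraph about non-root-type ideals is redundant and can be deleted, which is just as well since the centralizer estimate given there is the only genuinely hand-wavy step in the write-up.
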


\begin{proof} Up to conjugation by inner automorphisms, the maximal Lie subalgebras of $\fr{g}_2$ that have maximal rank are the algebras
$\fr{su}(2)\oplus \fr{su}(2)$ and $\fr{su}(3)$ (\cite{BoSi}).  If there existed another embedding of $\fr{su}(2)\oplus \fr{su}(2)$ in $\fr{g}_2$ then the algebra would be contained in a maximal subalgebra of maximal rank, i.e. in $\fr{su}(3)$.  But this is a contradiction since the only regular subalgebras of $\fr{su}(n+1)^{\mathbb C}=A_n$ are of the form $A_{k_1}+\cdots +A_{k_s}$ where $\sum_{i=1}^s(k_i+1)=n+1$ (see for example \cite{Dy}).\end{proof}

\noindent \emph{Proof of Proposition \ref{mainargument}.}  Let $\Lambda\in \op{End}(\fr{m})$ be a metric endomorphism corresponding to a g.o. metric and let $\langle \ ,\ \rangle $ be the corresponding inner product on $\fr{m}$.  By Lemma \ref{Katak}, $\fr{p}$ coincides with the Lie algebra $\fr{n}_{\fr{g}_2}(\fr{h})/\fr{h}$ of $N_{G_2}(H)/H$.  Lemma \ref{DualNormalizer} implies that $\left.\Lambda\right|_{\fr{p}}$ defines a bi-invariant metric (and hence a g.o. metric) on $N_{G_2}(H)/H$.  Since $\fr{p}$ is simple, Lemma \ref{GOLieGroups} yields

\begin{equation}\label{eq1}\left.\Lambda\right|_{\fr{p}}=\mu\op{Id}.\end{equation}

\noindent Consider the $Q$-orthogonal decomposition $\fr{m}=\fr{p}\oplus \fr{q}$.  The space $\fr{q}$ coincides with the tangent space $T_o(G_2/N_{G_2}(H))$ and Corollary \ref{NormalizerCorol} implies that $\left.\Lambda\right|_{\fr{q}}$ defines a g.o. metric on $G_2/N_{G_2}(H)$. On the other hand, the fact that $\fr{n}_{\fr{g}_2}(\fr{h})=\fr{h}\oplus \fr{p}=\fr{su}(2)\oplus \fr{su}(2)=\fr{so}(4)$ along with the fact that the embedding of $\fr{su}(2)\oplus \fr{su}(2)$ in $\fr{g}_2$ is unique up to conjugation by an inner automorphism of $\fr{g}_2$ (Lemma \ref{Uniq}) imply that $\fr{q}$ coincides with the tangent space of the symmetric space $G_2/SO(4)$.  Thus $\fr{q}$ is $\op{ad}_{\fr{n}_{\fr{g}_2}(\fr{h})}$-irreducible and hence the g.o. metric $\left.\Lambda\right|_{\fr{q}}$ satisfies $\left.\Lambda\right|_{\fr{q}}=\lambda\op{Id}$.  Along with Equation \eqref{eq1}, we conclude that 

\begin{equation*}\langle \ ,\ \rangle=\lambda\left.(-Q)\right|_{\fr{q}\times \fr{q}}+\mu\left.(-Q)\right|_{\fr{p}\times \fr{p}}.\end{equation*}
   
\noindent Here $\fr{q}$ coincides with the tangent space at the origin of the symmetric space $G_2/SO(4)$ and $\fr{p}$ coincides with the tangent space at the origin of the fiber $SO(4)/(H\cap SO(4))^0$, where we note that the Lie algebra of the compact, connected group $(H\cap SO(4))^0$ is also $\fr{h}$.  By part (ii) of Proposition \ref{UnivCover}, the metric $\langle \ ,\ \rangle$ on $G_2/H$ also defines a $G_2$-invariant g.o. metric on the space $G_2/(H\cap SO(4))^0$, which is fibered over the irreducible symmetric space $G_2/SO(4)$.  Besides, the metric $\langle \ ,\ \rangle$  has the form \eqref{FibForm}.  Taking into account the classification table of the g.o. spaces fibered over irreducible symmetric spaces in \cite{Ta}, we deduce that $\langle \ ,\ \rangle$ is necessarily the standard metric on $G_2/(H\cap SO(4))^0$, or equivalently, $\Lambda=\lambda\op{Id}$.  Therefore, $\Lambda$ also defines the standard metric on $G_2/H$.\qed\\

We now proceed to examine each of the four cases $\fr{h}=\fr{su}(2)$ and $\fr{g}=\fr{g}_2$ in Table~II.  Firstly, assume that $\fr{h}=\op{span}_{\mathbb R}\{\iu H_{\alpha}\}\oplus \fr{m}_{\alpha}$.  Taking into account the explicit expression of $\fr{m}$ in Table~II, relations \eqref{ma} as well as the Lie bracket relations in Lemma \ref{BracketLem} along with the values of the integers $N_{\gamma,\delta}$ in subsection \ref{rootsection}, we deduce that the isotypic component 

\begin{equation*}\fr{n}_{\fr{g}}(\fr{h})/\fr{h}=\{X\in \fr{m}:[a,X]=0\ \ \makebox{for all} \ \ a\in \fr{h}\},\end{equation*}

\noindent of $\op{ad}^{\fr{g}/\fr{h}}$ coincides with the Lie algebra $\op{span}_{\mathbb R}\{\iu H_{3\alpha+2\beta}\}\oplus \fr{m}_{3\alpha+2\beta}$ which is isomorphic to $\fr{su}(2)$.  Proposition \ref{mainargument} then implies that any g.o. metric on the corresponding space $G_2/H$ is standard.

Secondly, assume that $\fr{h}=\op{span}_{\mathbb R}\{\iu H_{\beta}\}\oplus \fr{m}_{\beta}$.  Using the same arguments as above we deduce that the isotypic component $\fr{p}=\fr{n}_{\fr{g}}(\fr{h})/\fr{h}$ coincides with the Lie algebra $\op{span}_{\mathbb R}\{\iu H_{2\alpha+\beta}\}\oplus \fr{m}_{2\alpha+\beta}$, which is isomorphic to $\fr{su}(2)$.  Again, Proposition \ref{mainargument} implies that any g.o. metric on the corresponding space $G_2/H$ is standard.

Now assume that $\fr{h}=\op{span}_{\mathbb R}\{\sqrt{2}(F_{3\alpha+2\beta}-F_{\beta}),\sqrt{2}(G_{3\alpha+2\beta}+G_{\beta}),2\iu H_{3\alpha+\beta}\}$.  Taking into account the explicit expression of $\fr{m}$ in Table~II, relations \eqref{ma} as well as the Lie bracket relations in Lemma \ref{BracketLem} along with the values of the integers $N_{\gamma,\delta}$ in subsection \ref{rootsection}, we deduce that the isotropy representation $\op{ad}^{\fr{g}/\fr{h}}:\fr{h}\rightarrow \op{End}(\fr{m})$ admits the isotypic decomposition $\fr{m}=\fr{p}\oplus \fr{q}$, where

\begin{equation*}\fr{p}=\op{span}_{\mathbb R}\{\sqrt{2}(F_{3\alpha+2\beta}+F_{\beta}),\sqrt{2}(G_{3\alpha+2\beta}-G_{\beta}), 2\iu H_{\alpha+\beta}\}\oplus \fr{m}_{3\alpha+\beta}\ \  \makebox{and} \ \ \fr{q}=\fr{m}_{\alpha}\oplus \fr{m}_{\alpha+\beta}\oplus \fr{m}_{2\alpha+\beta}.\end{equation*}

\noindent Moreover, the submodules $\fr{p},\fr{q}$ are $\op{ad}_{\fr{h}}$-irreducible.  Taking into account the classification of the non-normal simply connected geodesic orbit spaces $(G/H,g)$ with two irreducible isotropy summands in \cite{CheNi}, and in particular Theorem 2, we deduce that there is no such space with $G=G_2$.  Hence any g.o. metric on the corresponding space $G_2/H$ is normal (in fact standard).

  Finally assume that $\fr{h}=\op{span}_{\mathbb R}\{\sqrt{6}F_{\alpha}+\sqrt{10}F_{\beta},\sqrt{6}G_{\alpha}+\sqrt{10}G_{\beta},14\iu H_{9\alpha+5\beta}\}$.  Again taking into account the explicit expression of $\fr{m}$ in Table~II, relations \eqref{ma} as well as the Lie bracket relations in Lemma \ref{BracketLem} along with the values of the integers $N_{\gamma,\delta}$ in subsection \ref{rootsection}, we deduce that $\fr{m}$ is $\op{ad}_{\fr{h}}$-irreducible and hence the corresponding space $G_2/H$ is isotropy irreducible (see also \cite{Wo}).  As a result, any $G$-invariant metric on $G_2/H$ is standard.  This concludes the proof of Theorem \ref{main}.

\end{document}